\theoremstyle{plain}
\newtheorem{thm}{Theorem}[section]
\newtheorem{lemma}[thm]{Lemma}
\newtheorem{prop}[thm]{Proposition}
\newtheorem{cor}[thm]{Corollary}
\newtheorem{conj}[thm]{Conjecture}
\theoremstyle{definition}
\theoremstyle{remark}
\newtheorem{remark}[thm]{Remark}
\DeclareMathOperator{\Gal}{Gal}
\DeclareMathOperator{\ord}{ord}
\DeclareMathOperator{\cusp}{cusp}
\DeclareMathOperator{\Tr}{Tr}
\DeclareMathOperator{\Ann}{Ann}
\DeclareMathOperator{\Res}{Res}
\newcommand{\Z}{\mathbb Z}
\newcommand{\Q}{\mathbb Q}
\newcommand{\C}{\mathbb C}
\newcommand{\cO}{\mathcal{O}}
\newcommand{\cT}{\mathcal{T}}
\newcommand{\cE}{\mathcal{E}}
\newcommand{\cL}{\mathcal{L}}
\newcommand{\cH}{\mathcal{H}}
\newcommand{\cI}{\mathcal{I}}
\newcommand{\et}{\mathrm{\acute{e}t}}
\newcommand{\p}{\mathfrak p}
\newcommand{\map}{\longrightarrow}
\newcommand{\isom}{\cong}
\newcommand{\embed}{\hookrightarrow}
\newcommand{\surj}{\twoheadrightarrow}
\newcommand{\<}{\langle}   
\renewcommand{\>}{\rangle} 
\newcommand{\ol}{\overline}
\newcommand{\wt}{\widetilde}
\def\mfr{\mathfrak}
\numberwithin{equation}{section}
\begin{document}
\title{On Sharifi's conjecture: exceptional case}
\author{Sheng-Chi Shih and Jun Wang}
\address{Fakultät für Mathematik, Oskar-Morgenstern-Platz 1, A-1090 Wien, Austria.}
\email{sheng-chi.shih@univie.ac.at}
\address{Morningside Center of Mathematics, No. 55, Zhongguancun East Road, Beijing, 100190, China}
\email{jwangmathematics@gmail.com}
\date{\today}

\begin{abstract}
In the present article, we study the conjecture of 
Sharifi on the surjectivity of the map $\varpi_{\theta}$. Here $\theta$ is a primitive even Dirichlet character of conductor $Np$, which is exceptional in the sense of Ohta. After localizing at the prime ideal $\mfr{p}$ of the Iwasawa algebra related to the trivial zero of the Kubota\textendash Leopoldt $p$-adic $L$-function $L_p(s,\theta^{-1}\omega^2)$, we compute the image of $\varpi_{\theta,\mfr{p}}$ in a local Galois cohomology group and prove that it is an isomorphism. Also, we prove that the residual Galois representations associated to the cohomology of modular curves are decomposable after taking the same localization. 
\end{abstract}
\maketitle
\addtocontents{toc}{\setcounter{tocdepth}{1}}
\tableofcontents

\section{Introduction}\label{sec:01}
In this paper, we study a conjecture of Sharifi that refines the Iwasawa main conjecture. To be more precise, let us start by preparing for notation. Let $N$ be a positive integer, and let $p\geq 5$ be a prime number not dividing $N\phi(N)$, where $\phi(N)$ is the order of the group $(\Z/N\Z)^{\times}$. For a positive integer $M$, we denote by $Y_1(M)$ the moduli space over $\Z[1/M]$ of elliptic curves $E$ with an injective homomorphism $\Z/M\Z\embed E$ and denote by $X_1(M)$ its compactification. Also, we set $\zeta_M:=e^{2\pi i/M}$. Following the convention of \cite{FK}, the cohomology of (compact) modular curves is naturally endowed with the action of dual Hecke operators. Set $H:=\varprojlim_r H^1_{\et}(X_1(Np^r)_{/\ol{\Q}},\Z_p)^{\ord}$, where the superscript ``ord" means the ordinary part for the dual Hecke operator $U^*_p$. Let $\mfr{h}^*=\Z_p[T^*_n \mid n\geq 1]$ be the Hecke algebra acting on $H$. We will write the dual Hecke operator $T^*_q$ as $U^*_q$ for all $q|Np$. Let $I^*$ be the Eisenstein ideal in $\mfr{h}^*$  generated by $T^*_{\ell}-\ell\<\ell\>^{-1}-1$ for all primes $\ell\nmid Np$ and by $U^*_q-1$ for all primes $q|Np$. For a $G_{\Q}:=\Gal(\ol{\Q}/\Q)$-module $M$, we denote by $M^+$ (resp.~$M^-$) the submodule of $M$ on which complex conjugation acts via $1$ (resp.~$-1$).

Let
$$
\varpi:H^-(1)\to \varprojlim_r H^2(\Z[\zeta_{Np^r},\tfrac{1}{p}],\Z_p(2))^+:=S
$$
be the $\Lambda:=\Z_p[(\Z/Np\Z)^{\times}]\lsem 1+p\Z_p \rsem$-module homomorphism constructed by Sharifi in \cite[Section~5.3]{Sha} (or see Section~\ref{sec:02} for the definition). It was conjectured by Sharifi (Conjecture~5.8 in \textit{loc.~cit.}) and proved by Fukaya\textendash Kato \cite[Theorem~5.2.3]{FK} that the kernel of $\varpi$ contains the Eisenstein ideal $I^*$.

For a Dirichlet character $\theta$ of conductor $Np$, we denote by $\Z_p[\theta]$ the $\Z_p$-algebra generated by the values of $\theta$, on which $(\Z/Np\Z)^{\times}$ acts via $\theta$. For a $\Z_p[(\Z/Np\Z)^{\times}]$-modular $A$,  we denote by $A_{\theta}:=A\otimes_{\Z_p[(\Z/Np\Z)^{\times}]} \Z_p[\theta]$ the $\theta$-eigenspace of $A$. It follows from the assumption $p\nmid N\phi(N)$ that one has $H^-(1)=\bigoplus_{\theta} H^-_{\theta}(1)$ and $S=\bigoplus_{\theta} S_{\theta}$, where the direct sums run through all even Dirichlet characters of modulus $Np$. The following is a conjecture of  McCallum\textendash Sharifi \cite[Conjecture~5.3]{McCallum-Sharifi} (or see \cite[Conjecture~5.4]{Sha}). 

\begin{conj}\label{conj:Mc_Sha}
Let $\theta$ be a primitive even Dirichlet character of conductor $Np$. Then the homomorphism $\varpi$ induces a surjective  homomorphism of $\Lambda_{\theta}$-modules
$$
\varpi_{\theta}:H^-_{\theta}(1)/I_{\theta}^*H^-_{\theta}(1)\surj S_{\theta}.
$$
\end{conj}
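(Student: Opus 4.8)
The plan is to establish the isomorphism $\varpi_{\theta}$ by localizing on the regular two-dimensional ring $\Lambda_{\theta}$ and isolating the unique height-one prime $\mfr p$ attached to the trivial zero of $L_p(s,\theta^{-1}\omega^{2})$, where the classical arguments break down. First I would recall from \cite{FK} the companion homomorphism $\Upsilon_{\theta}\colon S_{\theta}\to H^{-}_{\theta}(1)/I^{*}_{\theta}H^{-}_{\theta}(1)$ built from Beilinson--Kato elements, together with the two compatibilities that $\varpi_{\theta}\circ\Upsilon_{\theta}$ and $\Upsilon_{\theta}\circ\varpi_{\theta}$ are multiplication by an element of $\Lambda_{\theta}$ whose image generates, up to a unit, the characteristic ideal furnished by the Iwasawa main conjecture of Mazur--Wiles. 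Away from $\mfr p$ the localization of $\Lambda_{\theta}$ is a discrete valuation ring, over which the module-theoretic pathologies of the exceptional case (failure of Ohta's Gorenstein property, non-freeness of $H^{-}_{\theta}$) disappear; hence the arguments available in the non-exceptional case apply and show $\varpi_{\theta}$ to be an isomorphism after localizing at every height-one prime $\neq\mfr p$. Combined with the equality of characteristic ideals on both sides of $\varpi_{\theta}$ and the absence of nonzero pseudo-null submodules in $S_{\theta}$ and $H^{-}_{\theta}(1)/I^{*}_{\theta}H^{-}_{\theta}(1)$, this reduces Conjecture~\ref{conj:Mc_Sha} for exceptional $\theta$ to the single assertion that $\varpi_{\theta,\mfr p}$ is an isomorphism of $\Lambda_{\theta,\mfr p}$-modules.

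Next I would establish the crucial new input: the residual Galois representation on $H_{\theta}$ becomes decomposable after localizing at $\mfr p$. Generically this residual representation at the Eisenstein ideal is a non-split extension $\begin{pmatrix}1&*\\0&\ol\theta\end{pmatrix}$, and, following Ohta, the non-triviality of $*$ is measured by a cup-product (Kummer) class in a global Galois cohomology group whose vanishing amounts to a distinguished linear factor of $L_p(s,\theta^{-1}\omega^{2})$ lying in the given prime. Exceptionality of $\theta$ is precisely what forces such a trivial zero, so after localizing at $\mfr p$ the obstruction dies and the representation splits into two Galois-eigenlines --- one with trivial action, one on which $G_{\Q}$ acts through $\theta\omega^{-1}$ (suitably normalized). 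Lifting this splitting over the discrete valuation ring $\Lambda_{\theta,\mfr p}$ by an idempotent argument then produces a canonical rank-one $G_{\Q_{p}}$-stable (``ordinary at $p$'') direct summand of $H_{\theta,\mfr p}$.

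With the decomposition in hand I would compute both sides of $\varpi_{\theta,\mfr p}$ inside a common local Galois cohomology module. On the automorphic side, the ordinary summand exhibits $H^{-}_{\theta,\mfr p}(1)/I^{*}_{\theta}H^{-}_{\theta,\mfr p}(1)$ as a piece of the Iwasawa cohomology $\varprojlim_{r}H^{1}$ of the decomposition group at $p$ along the cyclotomic tower, through which Sharifi's explicit cup-product construction of $\varpi$ is seen to factor. On the arithmetic side, the localization-at-$p$ map together with Poitou--Tate duality identifies $S_{\theta,\mfr p}$ with the local module $\varprojlim_{r}H^{2}(\Q_{p}\otimes\Q(\zeta_{Np^{r}}),\Z_{p}(2))^{+}_{\theta,\mfr p}$, the discrepancy being controlled by a ``global'' characteristic ideal coprime to $\mfr p$ --- the trivial zero being intrinsically a local-at-$p$ phenomenon coming from the Euler factor of $L_p(s,\theta^{-1}\omega^{2})$. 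Matching these two descriptions, and using Fukaya--Kato's compatibility of $\varpi$ with the localization maps, I would check that under the identifications $\varpi_{\theta,\mfr p}$ is multiplication by a unit, hence an isomorphism; with the first paragraph this completes the proof.

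The hard part will be the decomposability statement and, inseparable from it, the bookkeeping showing that the ``exceptional'' extra module appearing at $\mfr p$ on the modular side --- coming from the degenerate behaviour at $\mfr p$ of the operator $U^{*}_{p}-1$ on the Eisenstein component of $\mfr h^{*}_{\theta}/I^{*}_{\theta}$ --- has exactly the same length over $\Lambda_{\theta,\mfr p}$ as the extra local-at-$p$ contribution to $S_{\theta,\mfr p}$, with no spurious finite or pseudo-null discrepancy. This is a delicate length computation over the discrete valuation ring $\Lambda_{\theta,\mfr p}$ that must keep track simultaneously of Ohta's (now non-Gorenstein) Hecke module, the structure of $\varprojlim_{r}H^{2}(\Z[\zeta_{Np^{r}},\tfrac{1}{p}],\Z_{p}(2))_{\theta}$, and the exceptional form of the Iwasawa main conjecture for $L_p(s,\theta^{-1}\omega^{2})$.
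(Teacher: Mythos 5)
The statement you are addressing is labeled a \emph{conjecture} in the paper, and the paper does not prove it: the result actually established (Theorem~\ref{11}) is the much weaker statement that, for $\theta$ \emph{exceptional}, $\varpi_{\theta,\mfr p}$ is an isomorphism after localizing at the single height-one prime $\mfr p$. Your opening paragraph attempts to reduce the full surjectivity of $\varpi_{\theta}$ to surjectivity at all height-one primes, but this reduction is not valid as stated: isomorphism after every height-one localization only controls the pseudo-null part, and a pseudo-null \emph{quotient} of $S_{\theta}$ need not vanish --- absence of pseudo-null \emph{sub}modules, which you invoke, bears on the kernel, not the cokernel. Moreover, away from $\mfr p$ the paper only remarks, under Greenberg's conjecture, that the Wake--Wang-Erickson construction of $\Upsilon_{\theta,\mfr q}$ ``can be adapted'' for exceptional $\theta$; it does not claim (and it is not true) that the exceptional pathologies simply disappear at the other primes.

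For the crucial localization at $\mfr p$, your proposed route is logically inverted relative to the paper and is missing the computation that makes the argument close. You take decomposability of the residual Galois representation as the key input, lift it to a $\Lambda_{\theta,\mfr p}$-linear splitting, and read the isomorphism off that. In the paper, decomposability is Theorem~\ref{13}, proved in Section~\ref{sec:05} \emph{after} Theorem~\ref{11} and never used in its proof, and no lift of the splitting to $\Lambda_{\theta,\mfr p}$ is claimed (the statement is only about $k_{\theta,\mfr p}[G_{\Q}]$-modules). What actually drives the paper is: (i) both $H^-_{\theta,\mfr p}/I^*_{\theta,\mfr p}H^-_{\theta,\mfr p}$ and $S_{\theta,\mfr p}$ are one-dimensional over $k_{\theta,\mfr p}$, via Gorensteinness of $\mfr h^*_{\theta,\mfr p}$ (Betina--Dimitrov--Pozzi) and the identification of $S_{\theta,\mfr p}$ with a local Brauer-group quotient $(\oplus^0_{v\in\Sigma}\Z_p)_{\chi}(1)_{\mfr p}$; (ii) the explicit element $(1-U^*_q)\{0,\infty\}_{\theta,DM}$ lies in $H^-_{\theta}$ and maps under $\varpi_{\theta}$ to the cup product $(q,1-\zeta_{Np^r})_{r,\theta}$; and (iii) that cup product is evaluated by the local explicit reciprocity law, giving $\tfrac{(p-1)\log_p(\ell)}{p\phi(N)}\omega(N)\tau(\chi^{-1})L(0,\chi)$ for $\ell\mid N$ and the $\cL(\chi)$-variant for $\ell=p$, both nonzero. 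Without this explicit reciprocity computation --- entirely absent from your sketch --- there is no mechanism to verify nonvanishing of the image, and the claim that $\varpi_{\theta,\mfr p}$ is ``multiplication by a unit'' has no content.
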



Denote by $\omega$ the Teichm\"{u}ller character. Following \cite{Oht2}, a primitive even Dirichlet character $\theta$ of conductor $Np$ is said to be exceptional if the character $\chi:=\theta\omega^{-1}$ is trivial on $(\Z/p\Z)^{\times}$ and $\chi|_{(\Z/N\Z)^{\times}}(p)=1$.  When the character $\theta$ is not exceptional, a conjecture of Sharifi \cite[Conjecture~5.2 and the remark at the end of Section~5]{Sha} predicts that $\varpi_{\theta}$ is indeed an isomorphism together with the inverse homomorphism $\Upsilon_{\theta}$. Also, he proved that there is a canonical isomorphism of $\Lambda_{\theta}$-modules $S_{\theta}\isom X_{\chi}(1)$ (Lemma~4.11 in \textit{loc.~cit.}), where $X$ is the Galois group of the maximal abelian unramified pro-$p$ extension of $\Q(\zeta_{Np^{\infty}})$. The main conjecture asserts that the characteristic ideal of $X_{\chi}$ is generated by a certain $p$-adic $L$-function. Thus, the conjecture of $\varpi_{\theta}$ being an isomorphism is a refinement of the main conjecture and predicts the $\Lambda_{\theta}$-module structure of $X_{\chi}$. Some partial results on this conjecture were proved by Fukaya\textendash Kato \cite{FK}, Fukaya\textendash Kato\textendash Sharifi \cite{FKS}, and Wake\textendash Wang-Erickson \cite{WE}. Their ideas are to prove that the homomorphism $\Upsilon_{\theta}$ is an isomorphism and is the inverse of $\varpi_{\theta}$. The requirement of $\theta$ being not exceptional is essential to the construction of $\Upsilon_{\theta}$ as in this case, one can apply a work of Ohta  \cite{Oht2} to show that one has a short exact sequence of $\Lambda_{\theta}[G_{\Q}]$-modules
$$
0 \to H_{\theta}^-/I_{\theta}^*H_{\theta}^- \to H_{\theta}/I_{\theta}^*H_{\theta} \to H_{\theta}^+/I_{\theta}^*H_{\theta}^+ \to 0.
$$

When $\theta$ is exceptional, there is no literature discussing the homomorphism $\varpi_{\theta}$. One of the difficulties in this case is that it is not clear whether one of $H_{\theta}^-/I_{\theta}^*H_{\theta}^-$ and $H_{\theta}^+/I_{\theta}^*H_{\theta}^+$ is $G_{\Q}$-stable, and hence, one can not construct $\Upsilon_{\theta}$ following Sharifi's construction.

One of the goals in this paper is to study the homomorphism $\varpi_{\theta}$ when $\theta$ is exceptional. We will prove that it is an isomorphism after taking the localization at a certain height one prime ideal of $\Lambda_{\theta}$ corresponding to the trivial zero of the Kubuta\textendash Leopoldt $p$-adic $L$-function $L_p(s,\theta^{-1}\omega^2)$ without constructing $\Upsilon_{\theta}$ (Theorem~\ref{11}). It is known that the leading coefficient of this $p$-adic $L$-function involves a certain $\cL$-invariant. Since Sharifi's conjecture is a refinement of the main conjecture, it is natural to ask how such an $\cL$-invariant is related to Sharifi's conjecture. This question will be addressed in Theorem~\ref{12} below. Another goal is to study the residual Galois representation attached to $H_{\theta}=H_{\theta}^-\oplus H_{\theta}^+$ after taking the same localization as in Theorem~\ref{11} (Theorem~\ref{13}). An advantage of taking such localization is that the image of $\varpi_{\theta}$ belongs to a certain local Galois cohomology group so that we are able to compute it explicitly.  Another advantage is that the cuspidal Hecke algebra is Gorenstein by a work of Betina\textendash Dimitrov\textendash Pozzi \cite[Theorem~A(i)]{BDP}, which makes the study of the cohomology of modular curves modulo an Eisenstein ideal easier (for example, see Proposition~\ref{source_of_varpi}). These two advantages are essential in our study. For instance, the former and the latter respectively help us to show the surjectivity and the injectivity of $\varpi_{\theta,\mfr{p}}$ in Theorem~\ref{11}.  

In the remaining of the Introduction, we assume that the character $\theta$ is exceptional. We will view $\chi$ as an odd primitive character of conductor $N$ with $\chi(p)=1$. Let $\kappa: \Gal(\Q(\zeta_{Np^{\infty}})/\Q(\zeta_{N}))\to \Z_p^{\times}$ be the $p$-adic cyclotomic character, and let $\gamma$ be a fixed topological generator of $\Gal(\Q(\zeta_{Np^{\infty}})/\Q(\zeta_{Np}))$. Then we can identify $\Lambda_{\theta}$ with $\Z_p[\theta]\lsem T\rsem$ by sending $\kappa(\gamma)$ to $1+T$. Let $\mfr{p}$ be the prime ideal of $\Lambda_{\theta}$ generated by $T+1-\kappa(\gamma)$, which is related to the trivial zero of the Kubota\textendash Leopoldt $p$-adic $L$-function $L_p(s,\theta^{-1}\omega^2)$ (see Section~\ref{sec:32}). We denote by $\Lambda_{\theta,\mfr{p}}$ the localization of $\Lambda_{\theta}$ at $\mfr{p}$ with residue field $k_{\theta,\mfr{p}}:=\Lambda_{\theta,\mfr{p}}/\mfr{p}$, and for a $\Lambda_{\theta}$-module $M$, we set $M_{\mfr{p}}:=M\otimes_{\Lambda_{\theta}} \Lambda_{\theta,\mfr{p}}$. The following is the first main result in this paper.

\begin{thm}\label{11}
Let $\theta$ be a primitive even Dirichlet character of conductor $Np$. Assume $p\geq 5$ with $p\nmid N\phi(N)$ and assume that $\theta$ is exceptional. Then $\varpi_{\theta}$ induces an isomorphism of $k_{\theta,\mfr{p}}$-vector spaces
$$
\varpi_{\theta,\mfr{p}}:H^-_{\theta,\mfr{p}}(1)/I^*_{\theta,\mfr{p}}H^-_{\theta,\mfr{p}}(1) \isom S_{\theta,\mfr{p}}.
$$
\end{thm}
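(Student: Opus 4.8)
Since $\mfr{p}$ has height one in the two-dimensional regular local ring $\Lambda_{\theta}\cong\Z_p[\theta]\lsem T\rsem$, the localization $\Lambda_{\theta,\mfr{p}}$ is a discrete valuation ring with residue field $k_{\theta,\mfr{p}}$. The plan is to show that the source and the target of $\varpi_{\theta,\mfr{p}}$ are both one-dimensional $k_{\theta,\mfr{p}}$-vector spaces and that $\varpi_{\theta,\mfr{p}}$ is nonzero, which then forces it to be an isomorphism. The two halves of this use the two features of the localization at $\mfr{p}$ highlighted in the Introduction: the target will be computed, and $\varpi_{\theta,\mfr{p}}$ will be shown to be surjective, by means of the explicit description of the image of $\varpi$ in a local Galois cohomology group at $p$; and the source will be computed using the Gorenstein property of \cite[Theorem~A(i)]{BDP}.

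\emph{The target and surjectivity.} By Sharifi's \cite[Lemma~4.11]{Sha} together with the Iwasawa main conjecture of Mazur\textendash Wiles, the characteristic ideal of $S_{\theta}$ over $\Lambda_{\theta}$ is generated, up to the Tate twist built into $S_{\theta}$, by $L_p(s,\theta^{-1}\omega^2)$. The prime $\mfr{p}$ is by construction the height-one prime at which the trivial zero of this $p$-adic $L$-function is detected, and that trivial zero is simple, by the non-vanishing of the associated $\cL$-invariant (Ferrero\textendash Greenberg, Brumer). Hence $S_{\theta,\mfr{p}}$ has length one over the discrete valuation ring $\Lambda_{\theta,\mfr{p}}$, i.e.\ it is a one-dimensional $k_{\theta,\mfr{p}}$-vector space. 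Moreover, unwinding $S_{\theta}=\varprojlim_r H^2(\Z[\zeta_{Np^r},\tfrac{1}{p}],\Z_p(2))^+_{\theta}$ via Poitou\textendash Tate duality, together with the description — valid near a trivial zero — of the unramified Iwasawa module by means of the localization map into the decomposition group at $p$, one identifies $S_{\theta,\mfr{p}}$ with an explicit semilocal Galois cohomology group at $p$. I would then trace through Sharifi's construction of $\varpi$ in \cite[Section~5.3]{Sha} — the \'etale realization of cup products of Siegel units, equivalently the $p$-component of the Fukaya\textendash Kato zeta element — and compute its localization at $p$ directly, verifying that it produces a generator of this semilocal group. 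This yields the surjectivity of $\varpi_{\theta,\mfr{p}}$: the key is that, whereas the image of $\varpi$ inside the global group $S_{\theta}$ is hard to pin down, its image inside the local group at $p$ can be written down by hand.

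\emph{The source and the conclusion.} By Proposition~\ref{source_of_varpi}, whose proof rests on the Gorenstein property of \cite[Theorem~A(i)]{BDP} and on Ohta's theory of congruence modules, the module $H^-_{\theta}(1)/I^*_{\theta}H^-_{\theta}(1)$ becomes, after localizing at $\mfr{p}$, isomorphic to $\Lambda_{\theta,\mfr{p}}/(\eta)$, where $\eta$ generates the cuspidal congruence ideal, which is the same ideal as $(L_p(s,\theta^{-1}\omega^2))$ up to units. By the simplicity of the trivial zero again — and since the Tate twist $(1)$ does not change the length — this is a one-dimensional $k_{\theta,\mfr{p}}$-vector space. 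Thus $\varpi_{\theta,\mfr{p}}$ is a surjection between one-dimensional $k_{\theta,\mfr{p}}$-vector spaces, hence an isomorphism.

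\emph{The main obstacle.} I expect the surjectivity step to be the crux. In the non-exceptional case one sidesteps it by constructing the inverse homomorphism $\Upsilon_{\theta}$, but that construction is unavailable here, so one has to show directly, by an explicit computation with Siegel-unit cup products in local Iwasawa theory at $p$, that $\varpi_{\theta,\mfr{p}}$ hits a generator of the local cohomology group. Making the Poitou\textendash Tate bookkeeping precise so that the global module $S_{\theta,\mfr{p}}$ genuinely collapses onto a single, computable semilocal term, and verifying the compatibility of $\varpi$ with this collapse, is the technical heart of the argument; by contrast the dimension count for the source via \cite{BDP} and the concluding implication are essentially formal.
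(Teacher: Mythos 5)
Your three-step skeleton (source is one-dimensional via Gorensteinness of \cite{BDP}; target is one-dimensional and local at the trivial zero; surjectivity via an explicit local computation) is essentially the paper's strategy, and you correctly flag the surjectivity step as the crux. But the gap you leave there is not merely a matter of ``tracing through the construction of $\varpi$'': there is a concrete intermediate step that you do not mention and that the paper has to work for.

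The missing ingredient is the production of an explicit element of $H^-_{\theta}$ on which $\varpi_{\theta}$ can be evaluated. Sharifi's $\varpi$ is only defined on the minus-part of the compactly supported cohomology, so the natural elements coming from Manin symbols (such as $\{0,\infty\}$) do not a priori live in $H^-_{\theta}$; they live in the larger module $\wt{H}_{\theta}$. The paper handles this by computing the congruence module $\wt{H}_{\theta,DM}/H_{\theta}\cong\Lambda_{\theta}/(\xi_{\theta})$ (Proposition~\ref{DM}) via the Drinfeld--Manin modification and Lafferty's result \eqref{eq:Matt_thesis}, which is a genuinely new computation in the exceptional case since Ohta's original argument does not apply. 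This yields that $(1-U_q^*)\{0,\infty\}_{\theta,DM}\in H^-_{\theta}$ and that its image under $\varpi_{\theta}$ is exactly $(q,1-\zeta_{Np^r})_{r\geq 1,\theta}$ (Corollary~\ref{image_of_varpi}); only then can one feed the explicit reciprocity law (Theorem~\ref{local_pairing}) into the computation and conclude non-vanishing from $L(0,\chi)\neq 0$. Without this intermediate step your proposal has a genuine hole: you are asserting that the localization of $\varpi$ at $p$ ``produces a generator of the semilocal group'' without exhibiting any element of the source whose image you can compute. Similarly, for the target, the dimension count via the main conjecture is fine, but what the paper actually needs is the explicit identification $S_{\theta,\mfr{p}}\isom(\oplus^0_{v\in\Sigma}\Z_p)_{\chi}(1)_{\mfr{p}}$ (Proposition~\ref{Brauer_structure}, via Lemma~\ref{vanishing_X_Sigma} showing $X_{\Sigma,\chi}(1)_{\mfr{p}}=0$), since it is against this local description, not the abstract length, that the Hilbert-symbol computation is made; this is the content of the ``Poitou--Tate bookkeeping'' you correctly foresee as necessary but do not carry out. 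Finally, a small inaccuracy in your source computation: the Gorenstein property gives $\mfr{h}^*_{\theta,\mfr{p}}\cong\Lambda_{\theta,\mfr{p}}$ and $I^*_{\theta,\mfr{p}}=\mfr{p}$; one then needs the additional fact that $H^-_{\theta,\mfr{p}}$ is free of rank one over $\mfr{h}^*_{\theta,\mfr{p}}$ (deduced from $H_{\theta,\mfr{p}}$ being free of rank two) before concluding that the quotient is $k_{\theta,\mfr{p}}$ — the quotient is not literally ``the cuspidal congruence module'' as you write.
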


When $\theta$ is not exceptional, assuming Greenberg's conjecture, an analog result of the above theorem has been proved by Wake\textendash Wang-Erickson \cite[Corollary~C]{WE}.  Their idea is first to show that for each height one prime ideal $\mfr{q}$ of $\Lambda_{\theta}$, the map $\Upsilon_{\theta,\mfr{q}}$ is an isomorphism and then, by a work of Fukaya\textendash Kato \cite{FK}, it is the inverse of $\varpi_{\theta,\mfr{q}}$. When $\theta$ is exceptional, their method can be adapted if one considers the localization at a height one prime ideal $\mfr{q}$ of $\Lambda_{\theta}$ other than $\mfr{p}$. In this case (or even without taking any localization), it is difficult to show that $\varpi_{\theta,\mfr{q}}$ is an isomorphism without using $\Upsilon_{\theta,\mfr{q}}$, since it is defined by the cup product of two cyclotomic units and is difficult to be computed.

A crucial point in the proof of Theorem~\ref{11} is that we are able to compute the image of $\varpi_{\theta,\mfr{p}}$ via the local explicit reciprocity law by showing that the image of $\varpi_{\theta,\mfr{p}}$ is contained in a certain local Galois cohomology group. There are three steps in the proof of the theorem: 
\begin{enumerate}
\item Show that both $H^-_{\theta,\mfr{p}}(1)/I^*_{\theta,\mfr{p}}H^-_{\theta,\mfr{p}}(1)$ and $S_{\theta,\mfr{p}}$ are $1$-dimensional $k_{\theta,\mfr{p}}$-vector spaces (Propositions~\ref{Brauer_structure} and \ref{source_of_varpi}). 
\item Show that $(1-U^*_q)\{0,\infty\}_{\theta,DM}$ is in $H_{\theta}^-$ whose image under $\varpi_{\theta}$ in $S_{\theta}$ is the cup product $(q,1-\zeta_{Np^r})_{r\geq 1, \theta}$ for all primes $q|Np$ (Corollary~\ref{image_of_varpi}). Here $\{0,\infty\}$ is the modular symbol attached to the cusps $0$ and $\infty$, and the subscript ``\textit{DM}" means the Drinfield\textendash Manin modification (see Section~\ref{sec:32} for the definition). 

\item Show that the cup product $(q,1-\zeta_{Np^r})_{r,\theta,\mfr{p}}$ is non-trivial for all $r$ big enough, which is the second main result in this article (Theorem~\ref{12}).
\end{enumerate}

We now introduce notation to state the second main result. Let $\tau(\chi^{-1})$ be the Gauss sum attached to $\chi^{-1}$. Denote by $\cL(\chi)$ the $\cL$-invariant attached to $\chi$ (for example, see \cite[(15)]{BDP} for the definition). It was proved by Gross \cite[Section~4]{gross} (or see \cite[Section~5.3]{BDP} for a different proof) that the $\cL$-invariant $\cL(\chi)$ is related to the derivative of the $p$-adic $L$-function $L_p(s,\chi\omega)$ at $s=0$, namely, one has $L'_p(0,\chi\omega)=\cL(\chi)L(0,\chi)$. 
Moreover, it was proved by Ferrero\textendash Greenberg \cite[Proposition~1]{FG} that $L'_p(0,\chi\omega)$ is a multiple of $p$.

\begin{thm}[Corollary~\ref{cup_product_map}]\label{12}
Let the notation be as above, and let the assumption be as in Theorem~\ref{11}. For each positive integer $r$, we have
$$
(\ell ,1-\zeta_{Np^r})_{r,\theta,\mfr{p}}=\frac{(p-1)\log_p (\ell)}{p \phi(N)} \omega(N)\tau(\chi^{-1}) L(0,\chi)\in (\cO/p^r\cO)(1) 
$$
for all $\ell |N$ and
$$
(p,1-\zeta_{Np^r})_{r,\theta,\mfr{p}}=\frac{(p-1)}{p \phi(N)} \omega(N)\tau(\chi^{-1}) \cL(\chi) L(0,\chi)\in (\cO/p^r\cO)(1).
$$
In particular, $(q,1-\zeta_{Np^r})_{r,\theta,\mfr{p}}$ is non-trivial for all $r$ big enough and for all $q|Np$.
\end{thm}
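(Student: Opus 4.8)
The plan is to reduce the computation of the global cup product $(q,1-\zeta_{Np^r})_{r,\theta,\mfr p}$ to a purely local computation at $p$, and then to evaluate the resulting local pairing by combining the explicit reciprocity law with Iwasawa's description of cyclotomic units in terms of the Kubota--Leopoldt $p$-adic $L$-function. First I would use the description of the target obtained after localizing at $\mfr p$ (see Proposition~\ref{Brauer_structure} and Section~\ref{sec:32}): the $\Lambda_{\theta,\mfr p}$-module $S_{\theta,\mfr p}$ is identified, via the local invariant map, with a local Galois cohomology group at $p$ --- the inverse limit over $r$ of the $\theta$-components of $H^2$ of the completion of $\Z[\zeta_{Np^r},1/p]$ at a prime above $p$ with $\Z_p(2)$-coefficients --- which is precisely $(\cO/p^r\cO)(1)$. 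Under this identification, $(q,1-\zeta_{Np^r})_{r,\theta,\mfr p}$ becomes the image of the \emph{local} cup product at $p$ of the Kummer classes of $q$ and $1-\zeta_{Np^r}$, restricted to the relevant $\theta$-eigenspace. This is exactly where exceptionality of $\theta$ and the choice of the trivial-zero prime $\mfr p$ are used: the obstruction that would otherwise come from the $\chi$-part of the class group --- recall the isomorphism $S_\theta\isom X_\chi(1)$ valid in the non-exceptional case --- degenerates after localizing at $\mfr p$, so the global class is detected faithfully by local data at $p$.

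Next, by local Tate duality the local cup product at $p$ is the $p^r$-th power Hilbert symbol on $\Q_p(\zeta_{Np^r})^{\times}$, and I would apply the explicit reciprocity law (Iwasawa--Wiles, or Kato's generalized explicit reciprocity law) to factor it, on the relevant eigenspace, as the product of a ``local symbol'' of $q$ and a ``local invariant'' of the norm-compatible system $(1-\zeta_{Np^r})_r$. For $q=\ell\mid N$ the Kummer class of $\ell$ is unramified (crystalline) at $p$, and its local symbol contributes the factor $\tfrac{(p-1)\log_p(\ell)}{p}$ up to elementary constants; for $q=p$ the Kummer class of the uniformizer $p$ lies in the complementary ``ramified direction'', and its local symbol is governed by the $\cL$-invariant $\cL(\chi)$, which by its very definition (see \cite[(15)]{BDP}) measures the slope of this line. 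The remaining factor --- the local invariant of the cyclotomic system --- is the same in both cases and is treated in the next step.

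The norm-compatible system $(1-\zeta_{Np^r})_r$ has an explicit Coleman power series (essentially $1-\zeta_N(1+X)$), and by Iwasawa's theorem its image under the Coleman/Coates--Wiles homomorphism, projected to the $\chi^{-1}$-eigenspace, is $\tau(\chi^{-1})$ times the Kubota--Leopoldt $p$-adic $L$-function $L_p(s,\chi\omega)$, up to the elementary factor $\tfrac{\omega(N)}{\phi(N)}$ coming from the conductor-$N$ twist and the choice of root of unity. Since $\theta$ is exceptional we have $\chi(p)=1$, so $L_p(s,\chi\omega)$ vanishes at $s=0$; hence the trivial zero forces the relevant invariant to be the derivative $L'_p(0,\chi\omega)$ rather than the value $L_p(0,\chi\omega)=0$. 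By Gross's theorem \cite[Section~4]{gross} (re-proved in \cite[Section~5.3]{BDP}) one has $L'_p(0,\chi\omega)=\cL(\chi)L(0,\chi)$, and by Ferrero--Greenberg \cite[Proposition~1]{FG} one has $L'_p(0,\chi\omega)\in p\Z_p$; the division by $p$ in the stated formulas accounts for this, so that both right-hand sides do lie in $(\cO/p^r\cO)(1)$. Assembling the factors from the reciprocity law with this computation yields the two displayed identities. The final non-triviality assertion then follows because $L(0,\chi)=-B_{1,\chi}\neq 0$ ($\chi$ being a nontrivial odd character), $\log_p(\ell)\neq 0$ (as $\ell^{p-1}\in 1+p\Z_p$ is $\neq 1$ and $\log_p$ is injective there), and $\cL(\chi)\neq 0$ --- equivalently $L'_p(0,\chi\omega)\neq 0$, which is the main theorem of \cite{FG}. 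Thus the right-hand sides are nonzero of some fixed finite $p$-adic valuation $v$, so $(q,1-\zeta_{Np^r})_{r,\theta,\mfr p}\neq 0$ in $(\cO/p^r\cO)(1)$ for all $r>v$ and all $q\mid Np$.

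I expect the main obstacle to be the case $q=p$ in the reciprocity step: identifying the local symbol of the non-crystalline Kummer class of the uniformizer with $\cL(\chi)$ and doing so with the precise normalization. This is the $\cL$-invariant-theoretic heart of the argument and is where the trivial-zero phenomenon really bites --- the first-order reciprocity computation vanishes identically, and one must isolate the next-order term and match it, constant by constant, with the definition of $\cL(\chi)$ in \cite{BDP} and with Gross's identity. The secondary difficulty is bookkeeping: carrying the elementary factors ($p-1$, $\phi(N)$, $\omega(N)$, the Gauss sum $\tau(\chi^{-1})$) correctly through the local identification and the reciprocity law so that the final constants come out exactly as stated.
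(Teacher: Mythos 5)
Your overall framework for $\ell\mid N$ — localize $S_{\theta,\mfr p}$ to a local $H^2$, identify the cup product with a Hilbert symbol, and evaluate it via the explicit reciprocity law applied to the Coleman power series of $1-\zeta_N(1+T)$ — is exactly the route of Theorem~\ref{local_pairing}. But the step where you identify the ``local invariant of the cyclotomic system'' is wrong, and this error propagates into the $\ell$-formula. You claim that, because $\chi(p)=1$, the cyclotomic unit system contributes $L'_p(0,\chi\omega)=\cL(\chi)L(0,\chi)$ rather than a value. That is not what happens. The explicit reciprocity computation for $(\ell,1-\zeta_{Np^r})$ (after $\theta$-averaging, expanding $\tfrac{z}{z-1}$ via partial zeta values $\zeta_{a\,(Np^r)}(0)$, and observing that only the $p^r\mid a$ terms survive) directly yields $\tfrac{(p-1)\log_p(\ell)}{p\phi(N)}\tau(\chi^{-1})L(0,\chi)$ — the \emph{classical} value $L(0,\chi)$, no $p$-adic derivative and no $\cL$-invariant. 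The trivial-zero phenomenon does not enter the $\ell$-computation at all; $\chi(p)=1$ is used only for bookkeeping in the $G_N=p^{-r}g_N$ substitution and in simplifying the $p^r\mid a$ contribution. Your proposed formula for $(\ell,\cdot)$ would therefore carry a spurious factor of $\cL(\chi)$. (You are also internally inconsistent: paragraph~2 attributes $\cL(\chi)$ to the ``local symbol of $p$'' while paragraph~3 attributes it to the cyclotomic factor; if both were true the $p$-formula would contain $\cL(\chi)^2$.)

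The bigger gap is $q=p$. You acknowledge that $\log_p(p)=0$ kills the first-order reciprocity term and that one ``must isolate the next-order term and match it, constant by constant, with the definition of $\cL(\chi)$,'' but you give no mechanism for doing so — and this is precisely the hard part you have not solved. The paper avoids this direct second-order local computation entirely. The actual argument is algebraic: Corollary~\ref{image_of_varpi} shows that $\varpi_\theta\bigl((1-U^*_q)\{0,\infty\}_{\theta,DM}\bigr)=(q,1-\zeta_{Np^r})_{r\geq 1,\theta}$ simultaneously for all $q\mid Np$ (the case $q=p$ being proved in \cite[Section~10.3]{FK}). Since $\varpi_\theta$ factors through $I^*_\theta$, one only needs the relation among the classes of $1-U^*_q$ in $\mfr h^*_{\theta,\mfr p}/I^*_{\theta,\mfr p}$, and \cite[Proposition~5.7]{BDP} supplies exactly $(U^*_p-1)/(U^*_\ell-1)=\cL(\chi)\log_p(\ell)^{-1}$. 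Multiplying the $\ell$-formula by this ratio gives the $p$-formula, and this is the only place $\cL(\chi)$ enters. Your proposal misses this route and hence leaves the $p$-case unproved. The closing non-triviality argument (via $L(0,\chi)\neq 0$, $\log_p(\ell)\neq 0$, and $\cL(\chi)\neq 0$ from Ferrero--Greenberg) is correct and matches the paper's remark.
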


We notice that the Gauss sum $\tau(\chi^{-1})$ is a unit in $\Z_p[\theta]$ (for example, see \cite[Lemma~6.1(c)]{Wa}). Then, the right hand side of the two formulas in the above theorem are integral because both $\log_p(\ell)$ and $\cL(\chi)L(0,\chi)$ are divisible by $p$. Thus, they have to be non-trivial in $(\cO/p^r\cO)(1)$ for all $r$ big enough as they are independent of $r$. 

\begin{remark}
The condition of $\theta$ being a primitive character is essential in the above theorems. It is also a crucial condition in the proof of \cite[Proposition~4.4]{BDP}, which is used to show the Gorensteinness of $\mfr{h}^*_{\theta,\mfr{p}}$. Thus, when $\theta$ is imprimitive, it is not known whether $\mfr{h}^*_{\theta,\mfr{p}}$ is Gorenstein, and hence, we can not assert that $\varpi_{\theta,\mfr{p}}$ is injective. In addition, for $q|Np$, the image of $(1-U^*_{q})\{0,\infty\}_{\theta,DM,\mfr{p}}$ under $\varpi_{\theta,\mfr{p}}$ may be trivial since $L(0,\chi)$ may be zero when $\chi$ is imprimitive.  
\end{remark}

By the second step of the proof of Theorem~\ref{11} mentioned above, one can show that $H^-_{\theta,\mfr{p}}/I^*_{\theta,\mfr{p}}H^-_{\theta,\mfr{p}}$ is $G_{\Q}$-stable by using a $\Lambda_{\theta}$-adic perfect pairing on $H_{\theta}$ constructed by Ohta (see Section~\ref{sec:05}). We will show that $H^+_{\theta,\mfr{p}}/I^*_{\theta,\mfr{p}}H^+_{\theta,\mfr{p}}$ is also $G_{\Q}$-stable.

\begin{thm}[Theorem~\ref{split_gal_rep_at_p}]\label{13}
Let the notation be as above, and let the assumption be as in Theorem~\ref{11}. Then the following short exact sequence of $k_{\theta,\mfr{p}}[G_{\Q}]$-modules splits
$$
0\to H^-_{\theta,\mfr{p}}/I^*_{\theta,\mfr{p}}H^-_{\theta,\mfr{p}}\to H_{\theta,\mfr{p}}/I^*_{\theta,\mfr{p}}H_{\theta,\mfr{p}}\to 
H^+_{\theta,\mfr{p}}/I^*_{\theta,\mfr{p}}H^+_{\theta,\mfr{p}}\to 0.
$$
\end{thm}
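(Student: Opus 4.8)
The plan is to reduce the theorem to the single assertion that $H^+_{\theta,\mfr p}/I^*_{\theta,\mfr p}H^+_{\theta,\mfr p}$ is $G_\Q$-stable as a submodule of $H_{\theta,\mfr p}/I^*_{\theta,\mfr p}H_{\theta,\mfr p}$, and then to establish that via Ohta's pairing. First I would record the reductions. By the propositions used in the first step of the proof of Theorem~\ref{11}, both $H^{\pm}_{\theta,\mfr p}/I^*_{\theta,\mfr p}H^{\pm}_{\theta,\mfr p}$ are one-dimensional over $k_{\theta,\mfr p}$, so $H_{\theta,\mfr p}/I^*_{\theta,\mfr p}H_{\theta,\mfr p}$ is two-dimensional. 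Since $p$ is odd, complex conjugation yields a decomposition $H_\theta=H^+_\theta\oplus H^-_\theta$ of $\Lambda_\theta$-modules, and this decomposition is respected by reduction modulo the Eisenstein ideal and localization at $\mfr p$; hence the sequence of the theorem splits as a sequence of $k_{\theta,\mfr p}[G_\Q]$-modules if and only if the $\Lambda_{\theta,\mfr p}/I^*_{\theta,\mfr p}$-direct complement $H^+_{\theta,\mfr p}/I^*_{\theta,\mfr p}H^+_{\theta,\mfr p}$ of $H^-_{\theta,\mfr p}/I^*_{\theta,\mfr p}H^-_{\theta,\mfr p}$ inside $H_{\theta,\mfr p}/I^*_{\theta,\mfr p}H_{\theta,\mfr p}$ is $G_\Q$-stable. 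Since $H^-_{\theta,\mfr p}/I^*_{\theta,\mfr p}H^-_{\theta,\mfr p}$ is already known to be $G_\Q$-stable (by the second step of the proof of Theorem~\ref{11} combined with Ohta's pairing, as indicated in the discussion preceding the theorem), it remains only to prove the same for the plus part.

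For this I would use Ohta's perfect $\Lambda_\theta$-adic pairing on $H_\theta$ recalled in Section~\ref{sec:05}. It is $\Lambda_\theta$-bilinear, $G_\Q$-equivariant, and valued in a Tate twist of $\Lambda_\theta$ which is odd for complex conjugation; since complex conjugation interchanges $H^+_\theta$ and $H^-_\theta$ and $p\neq 2$, both $H^+_\theta$ and $H^-_\theta$ are isotropic, so the pairing restricts to a perfect $G_\Q$-equivariant pairing between $H^+_\theta$ and $H^-_\theta$, that is, to a $\Lambda_\theta[G_\Q]$-isomorphism $H^+_\theta\cong\Hom_{\Lambda_\theta}(H^-_\theta,\Lambda_\theta(j))$ for the appropriate twist $j$. (If Ohta's construction produces instead a cross-pairing between $H_\theta$ and $H_{\theta^\dagger}$ for a companion character $\theta^\dagger$, one first checks that $\theta^\dagger$ is again exceptional and runs the same argument with $\theta^\dagger$ on the dual side; the relevant companion characters, such as $\theta^{-1}\omega^2=\chi^{-1}\omega$, are indeed exceptional.) Next I would reduce this pairing modulo the Eisenstein ideal and localize at $\mfr p$. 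Using that the Eisenstein ideal is, up to the diamond twist which is a unit on the $\theta$-eigenspace, self-adjoint for the pairing, and using the Gorensteinness of the cuspidal Hecke algebra $\mfr h^*_{\theta,\mfr p}$ from \cite[Theorem~A(i)]{BDP} to guarantee that perfectness is preserved under this passage (so that Hom commutes with the relevant base change), one obtains a non-degenerate $G_\Q$-equivariant pairing on the two-dimensional space $H_{\theta,\mfr p}/I^*_{\theta,\mfr p}H_{\theta,\mfr p}$ for which $H^-_{\theta,\mfr p}/I^*_{\theta,\mfr p}H^-_{\theta,\mfr p}$ and $H^+_{\theta,\mfr p}/I^*_{\theta,\mfr p}H^+_{\theta,\mfr p}$ are mutually orthogonal lines. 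Because the orthogonal complement of a $G_\Q$-stable submodule under a $G_\Q$-equivariant pairing is again $G_\Q$-stable, the plus part is $G_\Q$-stable, and the reduction above finishes the proof. (Alternatively one could identify the two one-dimensional Galois modules explicitly and show the extension class in the corresponding Selmer group vanishes, using the non-vanishing of $\cL(\chi)L(0,\chi)$ from Theorem~\ref{12} through Poitou--Tate duality; but the pairing argument is more direct.)

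I expect the main obstacle to be the bookkeeping around Ohta's pairing in the reduction modulo the Eisenstein ideal: one must fix the normalization of $H$ and determine which of the two Eisenstein ideals appears, establish the precise (self- or cross-) adjointness of the Eisenstein ideal for the pairing, and, most importantly, verify that perfectness survives the passage to the one-dimensional quotients. It is exactly here that Gorensteinness of $\mfr h^*_{\theta,\mfr p}$ is indispensable, since a degeneration of the reduced pairing would break the orthogonal-complement argument; this is also the reason the hypothesis that $\theta$ be primitive is essential. A secondary, smaller point is to make sure the ``minus part is $G_\Q$-stable'' input transports correctly across the pairing to the relevant companion character, for which the key observation is the one noted above, namely that any such companion character is again exceptional.
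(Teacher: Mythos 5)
Your opening reductions are fine, and Ohta's pairing is indeed the right tool for showing that $H^-_{\theta,\mfr p}/I^*_{\theta,\mfr p}H^-_{\theta,\mfr p}$ is $G_\Q$-stable, which is how the paper begins. The gap is in the orthogonal-complement step. Ohta's pairing on $H_\theta$ comes from Poincar\'e duality on the modular curve and is therefore alternating, and as you yourself note, the equivariance relation $(\sigma x,\sigma y)=\kappa(\sigma)^{-1}\theta^{-1}(\sigma)\langle\sigma\rangle^{-1}(x,y)$ gives $(cx,cy)=-(x,y)$ for complex conjugation $c$ since $\theta$ is even. So each of $H^+_\theta$ and $H^-_\theta$ is isotropic. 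But then in the two-dimensional space $H_{\theta,\mfr p}/I^*_{\theta,\mfr p}H_{\theta,\mfr p}$, the orthogonal complement of the isotropic line $H^-_{\theta,\mfr p}/I^*_{\theta,\mfr p}H^-_{\theta,\mfr p}$ is \emph{that same line}, not the plus line: a rank-one isotropic subspace of a two-dimensional space with a perfect pairing is its own perp. In particular $H^+$ and $H^-$ cannot be ``mutually orthogonal lines'' (if they were, and both isotropic, the whole pairing would vanish). So the step ``the orthogonal complement of a $G_\Q$-stable submodule is $G_\Q$-stable, hence the plus part is $G_\Q$-stable'' is circular and proves nothing about $H^+$. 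The pairing alone can identify the sub of the filtration, but it is insensitive to whether the extension splits. Your worries about Gorensteinness, adjointness of $I^*$, and companion characters are secondary; the argument already collapses at isotropy.

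The paper's actual argument is of a different nature, and is closer to your parenthetical alternative than to your main route. Having used the pairing against $\xi_\theta\{0,\infty\}_{\theta,DM}$ to establish that $H^-_{\theta,\mfr p}/I^*_{\theta,\mfr p}H^-_{\theta,\mfr p}$ is a $G_\Q$-submodule, it passes to the other natural lattice $I^*_{\theta,\mfr p}H^+_{\theta,\mfr p}\oplus H^-_{\theta,\mfr p}$ and the corresponding extension \eqref{exact sequence for c}. By \cite[Theorem~9.6.3]{FK} the class of \eqref{cuspidal extension} is the image of the cyclotomic-unit family $(1-\zeta_{Np^r})_{r,\theta,\mfr p}$ in $H^1(\Z[\tfrac1{Np}],\Lambda^\#_{\theta,\mfr p}/(\xi_{\theta,\mfr p})(1))$, and Tsuji's description of semi-local units shows its localization at $p$ is nonzero, so \eqref{exact sequence for c} does not split. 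Separately, the deformation-theoretic computation of \cite{BDP} shows the trace of the representation on $H_{\theta,\mfr p}$ is reducible modulo $(I^*_{\theta,\mfr p})^2$. Over the discrete valuation ring $\Lambda_{\theta,\mfr p}$ with uniformizer $f$ generating $I^*_{\theta,\mfr p}$, stability of the minus line gives $c\in(f)$ for the lower-left matrix entry, reducibility mod $(f^2)$ gives $bc\in(f^2)$, and the non-splitting of \eqref{exact sequence for c} gives $c\notin(f^2)$; hence $b\in(f)$, which is precisely the splitting of \eqref{eq:G_Q_lattice}. This lattice-change-plus-reducibility mechanism is what carries the proof, and it cannot be replaced by an orthogonality argument.
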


If one considers the localization at a height one prime ideal of $\Lambda_{\theta}$ other than $\mfr{p}$ (or the case that $\theta$ is not exceptional), it was shown by Ohta \cite[Section~3.3]{Oht3} (see \cite[Section~5.3]{Oht1} for not exceptional case) that the corresponding short exact sequence of $G_{\Q}$-modules in Theorem~\ref{13} does not split and the associated ordinary Galois representation is $p$-distinguished. Thus, one can construct an analog of the homomorphism $\Upsilon_{\theta}$ following Sharifi's construction. This construction can not be adapted to the situation of Theorem~\ref{13} since it follows from the assumption $\chi(p)=1$ that the associated Galois representation is not $p$-distinguished.

\subsection{Outline}
In Section~\ref{sec:02}, we briefly review the construction of $\varpi_{\theta}$ following \cite{Sha}. 

In Section~\ref{sec:03}, we first show that $S_{\theta,\mfr{p}}$ is isomorphic to a certain local Galois cohomology group and show that it is a $1$-dimensional $k_{\theta,\mfr{p}}$-vector space (Proposition~\ref{eq:brauer_group}). This phenomena is opposite to the case of $\theta$ being not exceptional. In this case, it is known \cite[Lemma~4.11]{Sha} that $S_{\theta}$ is isomorphic to a certain global Galois cohomology group. Second, we construct some elements in $H_{\theta}^-$ by computing the congruence module attached to \eqref{eq:coho_exact_seq} (Theorem~\ref{DM}). This plays an important role in computing the image of $\varpi_{\theta,\mfr{p}}$. When $\theta$ is not exceptional, it was proved by Ohta \cite[Section~3.5]{Oht2} that the desired congruence module is essentially isomorphic to the congruence module attached to \eqref{eq:modular_forms_seq}. His argument can not be adapted when the character $\theta$ is exceptional since in this case, it is not clear whether the short exact sequences (3.4.6) in \textit{loc.~cit.}~ split as Hecke-modules. Our idea is to use the Drinfeld\textendash Manin modification and the work of Lafferty \cite{Laf} to compute the desired congruence module. Third, we will show that the source of $\varpi_{\theta,\mfr{p}}$ is a $1$-dimensional $k_{\theta,\mfr{p}}$-vector space by a result of Betina\textendash Dimitrov\textendash Pozzi \cite{BDP} on the Gorensteinness of the cuspidal Hecke algebra after localizing at $\mfr{p}$.

Section~\ref{sec:04} is devoted to computing the image of $\varpi_{\theta,\mfr{p}}$ and completing the proof of Theorem~\ref{11}. The main goal of Section~\ref{sec:05} is to prove Theorem~\ref{13}.

\subsection*{Acknowledgments}
The authors would like to thank Adel Betina, Emmanuel Lecouturier, Romyar Sharifi, and  Preston Wake for helpful comments. The authors are grateful to the referee for the careful reading
and suggestions on the improvement of this manuscript.

The first author has been supported by the Labex CEMPI under Grant No.~ANR-11-LABX-0007-01, by I-SITE ULNE under Grant No.~ANR-16-IDEX-0004, and by Austrian Science Fund (FWF) under Grant No.~START-Prize Y966. The second author would like to thank Sujatha Ramdorai and Morningside Center of Mathematics
for supporting his postdoctoral studies. 
\section{Sharifi's Conjecture}\label{sec:02}

Throughout this paper, we keep the notation in the Introduction. The goal of this section is to review the construction of $\varpi_{\theta}$ in Conjecture~\ref{conj:Mc_Sha} following \cite{Sha}. We refer the reader to \textit{loc.~cit.} and \cite[Section~5.2]{FK} for more details.

Set $H_{r}:=H^1_{\et}(X_1(Np^r)_{/\ol{\Q}},\Z_p)^{\ord}$ and set $\wt{H}_{r}=H^1_{\et}(Y_{1}(Np^r)_{/\ol{\Q}},\Z_p)$. One can identify $\wt{H}_r$ with a certain relative homology group by Poincar\'{e} duality and the comparison between Betti (co)homology groups and \'{e}tale (co)homology groups. Namely, one has 
\begin{equation}\label{eq:coho_relative_homo}
\wt{H}_r(1) \isom H_1(Y_1(Np^r)(\C),C_1(Np^r),\Z_p),
\end{equation}
where $C_1(Np^r)$ is the set of cusps for $\Gamma_1(Np^r)$ (see \cite[Sections~3.4 and 3.5]{Sha}). 
From the discussion in Section~3.1 of \textit{loc.~cit.}, the group $H_1(Y_1(Np^r)(\C),C_1(Np^r),\Z_p)^+$ is generated by the adjusted Manin symbols $[u,v]_r^+$ for all $u,v\in \Z/Np^r\Z$ with $(u,v)=1$  satisfying the properties (3.3)-(3.7) in \textit{loc.~cit.}. We will identify these symbols with elements of $\wt{H}_r(1)^+$ via \eqref{eq:coho_relative_homo} without further notice. Let $\wt{H}_{r,0}(1)^+$ be the subgroup of $\wt{H}_r(1)^+$ generated by $[u,v]^+_r$ for all $u,v\in \Z/Np^r\Z-\{0\}$ with $(u,v)=1$. It was proved in Proposition~5.7 of \textit{loc.~cit.}~(or see \cite[Section~5.2.1]{FK}) that there exists a homomorphism
$$
\wt{\varpi}_{r}:\wt{H}_{r,0}(1)^{+}\rightarrow H^2(\Z[\zeta_{Np^r},\tfrac{1}{Np}],\Z_p(2))^+,
$$
sending $[u,v]_r^+$ to the cup product $(1-\zeta_{Np^r}^u,1-\zeta_{Np^r}^v)_r^+$ and satisfying $\wt{\varpi}_r\circ \<j\>_r=\sigma_j\circ \wt{\varpi}_r$ for all $j\in (\Z/Np^r\Z)^{\times}$. Here $\<j\>$ is the diamond operator and $\sigma_j\in \Gal(\Q(\zeta_{Np^r})/\Q)$ satisfying $\sigma_j(\zeta_{Np^r})=\zeta_{Np^r}^j$. Since $\wt{H}_r(1)^+\isom \wt{H}_r^-(1)$ and since $H^1_{\et}(X_1(Np^r)_{/\ol{\Q}},\Z_p)^-(1)$ is contained in $\wt{H}_{r,0}^-(1)$, the homomorphism $\wt{\varpi}_r$ induces via restriction a homomorphism
$$
\varpi_{r}:H^1_{\et}(X_1(Np^r)_{/\ol{\Q}},\Z_p)^-(1)\rightarrow H^2(\Z[\zeta_{Np^r},\tfrac{1}{Np}],\Z_p(2))^+
$$
whose image is contained in $H^2(\Z[\zeta_{Np^r},\tfrac{1}{p}],\Z_p(2))^+$ by \cite[Theorem~5.3.5]{FK}.

Let $\cI^*_{r}$ be the ideal of the Hecke algebra $\mathcal{H}_r^*$ (acting on $\wt{H}_{r,0}$) generated by $T^*_{\ell}-1-\ell\langle \ell \rangle^{-1}$ for all $\ell\nmid Np^r$ and $U^*_q-1$ for all $q| Np$, and denote by $I^*_{r}$ the image of $\cI^*_{r}$ in the cuspidal Hecke algebra $\mfr{h}^*_r$ acting on $H^1_{\et}(X_1(Np^r)_{/\ol{\Q}},\Z_p)$ under the natural map $\mathcal{H}_r^* \surj \mfr{h}^*_r$. It was conjectured by Sharifi (Conjecture~5.8 in \textit{loc.~cit.}) and proved by Fukaya\textendash Kato \cite[Theorem~5.2.3]{FK} that the map $\wt{\varpi}_{r}$ satisfies
\begin{equation}\label{eq:eisenstein_quotient}
\wt{\varpi}_{r}(\eta x)=0
\end{equation}
for all $\eta\in \cI^*_{r}$ and $x\in \wt{H}_{r,0}^-(1)$.
Moreover, they also proved that the following diagram commutes
\[
\begin{tikzcd}
H^1_{\et}(X_1(Np^{r+1})_{/\ol{\Q}},\Z_p)^-(1)\ar[d]\ar[r,"\varpi_{r+1}"]& H^2(\Z[\zeta_{Np^{r+1}},\tfrac{1}{p}],\Z_p(2))^+\ar[d]\\
H^1_{\et}(X_1(Np^r)_{/\ol{\Q}},\Z_p)^-(1)\ar[r,"\varpi_{r}"]& H^2(\Z[\zeta_{Np^r},\tfrac{1}{p}],\Z_p(2))^+,
\end{tikzcd}
\]
where the left and right vertical maps are the trace map and the norm map, respectively. This diagram induces a map by taking projective limit
\[
\varpi: \varprojlim_r H^1_{\et}(X_1(Np^r)_{/\ol{\Q}},\Z_p)^-(1) \rightarrow \varprojlim_r H^2(\Z[\zeta_{Np^r},\tfrac{1}{p}],\Z_p(2))^+=:S.
\]
It follows from \eqref{eq:eisenstein_quotient} that this map factors through the quotient by $I^*:=\varprojlim_r I^*_r$. Since $H^1_{\et}(X_1(Np^r)_{/\ol{\Q}},\Z_p)/I^*_rH^1_{\et}(X_1(Np^r)_{/\ol{\Q}},\Z_p)$ is isomorphic to $H_r/I^*_rH_r$ for all $r\in \Z_{\geq 1}$, we obtain a homomorphism 
$$
\varpi: H^-(1)/I^*H^-(1)\to S.
$$
Recall that in the Introduction we set $\Lambda:=\Z_p[[\varprojlim_r(\Z/Np^r\Z)^{\times}]]\isom \Z_p[ (\Z/Np\Z)^{\times}]\lsem (1+p\Z_p)\rsem$. From the above discussion, we know that both $H^-(1)$ and $S$ are $\Lambda$-modules and the map $\varpi$ satisfies $\varpi \circ \<j\>=\sigma_j\circ \varpi$  for all $j\in (\Z/Np\Z)^{\times}\times (1+p\Z_p)$. Then, for a primitive even Dirichlet character $\theta$ of conductor $Np$, the $\Lambda_{\theta}$-module homomorphism $\varpi_{\theta}$ in Conjecture~\ref{conj:Mc_Sha} is obtained by taking $\theta$ components on the both sides of $\varpi$. As in the Introduction, we will identify $\Lambda_{\theta}$ with $\Z_p[\theta]\lsem T\rsem$ without further notice.

\section{Galois cohomology and cohomology of modular curves}\label{sec:03}
In the rest of this article, we assume that the Dirichlet character $\theta$ is exceptional, i.e., the character $\chi:=\theta\omega^{-1}$ is trivial on $(\Z/p\Z)^{\times}$ and $\chi|_{(\Z/N\Z)^{\times}}(p)=1$. As in the Introduction, we will view $\chi$ as an odd primitive character of conductor $N$ with $\chi(p)=1$ without further notice. The aim of Section~\ref{sec:31} is to show that $S_{\theta,\mfr{p}}$ is a $1$-dimensional $k_{\theta,\mfr{p}}$-vector space. In Section~\ref{sec:32}, we first construct elements in $H^-_{\theta}(1)$ and then show that the $k_{\theta,\mfr{p}}$-vector space $H^-_{\theta,\mfr{p}}(1)/I^*_{\theta,\mfr{p}}H^-_{\theta,\mfr{p}}(1)$ is $1$-dimensional. 
\subsection{Localization of $S_{\theta}$ at $\mfr{p}$}\label{sec:31}
Let $\Sigma$ be the set of the finite places of $\Q(\zeta_{Np^{\infty}})$ above $p$, and let  $X_{\Q(\zeta_{Np^{\infty}})}$ (resp.~$X_{\Q(\zeta_{Np^{\infty}}),\Sigma}$) be the Galois group of the maximal abelian unramified pro-$p$ extension of $\Q(\zeta_{Np^{\infty}})$ (resp.~in which all primes above those in $\Sigma$ split completely).  

By \cite[Lemma 2.1]{Sha}, We have the following exact sequence
\begin{equation}\label{eq:long_exact_seq_S}
\begin{split}
0\rightarrow X_{\Q(\zeta_{Np^{\infty}}),\Sigma}\rightarrow& \varprojlim_r H^2(\Z[\zeta_{Np^r},\tfrac{1}{p}],\Z_p(1))\\
\rightarrow& \bigoplus_{v\in \Sigma} \varprojlim_r H^2(\Q(\zeta_{Np^r})_{v},\Z_p(1))\xrightarrow{(*)} \Z_p\rightarrow 0.
\end{split}
\end{equation}
Note that one has 
$$
\varprojlim_r H^2(\Z[\zeta_{Np^r},\tfrac{1}{p}],\Z_p(2))_{\theta}=\varprojlim_r H^2(\Z[\zeta_{Np^r},\tfrac{1}{p}],\Z_p(1))_{\chi}(1)
$$
and by local class field theory, one has
\begin{equation}\label{eq:brauer_group}
H^2(\Q(\zeta_{Np^r})_{v},\Z_p(1)):=\varprojlim_m H^2(\Q(\zeta_{Np^r})_{v},(\Z/p^m\Z)(1))
\cong \Z_p
\end{equation}
for all $v\in \Sigma$. Under the identification \eqref{eq:brauer_group}, the map $(*)$ in \eqref{eq:long_exact_seq_S} is the sum map whose kernel is denoted by $(\oplus^0_{v\in \Sigma}\Z_p)$. After taking $\chi$ components and the Tate twist on the exact sequence \eqref{eq:long_exact_seq_S}, we obtain the following short exact sequence
\begin{equation}\label{eq:long_ext_seq_S_theta}
0 \rightarrow X_{\Q(\zeta_{Np^{\infty}}),\Sigma,\chi}(1)\rightarrow S_{\theta}\rightarrow (\oplus^0_{v\in \Sigma}\Z_p)_{\chi}(1)\rightarrow 0.
\end{equation}
The following lemma shows that the first term in \eqref{eq:long_ext_seq_S_theta} is trivial after localizing at $\mfr{p}$.

\begin{lemma}\label{vanishing_X_Sigma}
Let the notation be as above. Then, the characteristic ideal of the $\Lambda_{\theta}$-module $X_{\Q(\zeta_{Np^{\infty}}),\chi}/X_{\Q(\zeta_{Np^{\infty}}),\Sigma,\chi}$ is $(T)$. Moreover, one has $X_{\Q(\zeta_{Np^{\infty}}),\Sigma,\chi}(1)_{\mfr{p}}=0$.
\end{lemma}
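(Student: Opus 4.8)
The plan is to establish the two assertions of Lemma~\ref{vanishing_X_Sigma} in turn, the first being a standard Iwasawa-theoretic identification and the second following from it together with the relation of $\mfr{p}$ to a trivial zero.

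\medskip

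\noindent\textbf{Step 1: The characteristic ideal of $X_{\Q(\zeta_{Np^{\infty}}),\chi}/X_{\Q(\zeta_{Np^{\infty}}),\Sigma,\chi}$ is $(T)$.} First I would recall that $X_{\Q(\zeta_{Np^{\infty}}),\Sigma}$ is the Galois group of the maximal abelian unramified pro-$p$ extension in which all primes above $\Sigma$ split completely, so that the quotient $X_{\Q(\zeta_{Np^{\infty}})}/X_{\Q(\zeta_{Np^{\infty}}),\Sigma}$ is governed by the local behaviour at the primes above $p$. Since $p$ is totally ramified in $\Q(\zeta_{Np^{\infty}})/\Q(\zeta_{Np})$ and $\chi$ has conductor $N$ with $\chi(p)=1$, there is (after taking $\chi$-components) a single relevant local factor, and the quotient in question is identified with the $\chi$-part of the local units / decomposition group at $p$ modulo the global units, i.e. with a quotient of $\Z_p\lsem T\rsem$ by the ideal generated by $\kappa(\mathrm{Frob}_p)\langle \mathrm{Frob}_p\rangle - 1$. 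Because $\chi(p)=1$, the arithmetic Frobenius at $p$ acts on $\Q(\zeta_{N})$ trivially on the $\chi$-eigenspace, so this element is exactly $\kappa(\gamma)^{a} - 1$ for the appropriate power; unwinding the identification $\kappa(\gamma)\mapsto 1+T$ one gets that the characteristic ideal is $(T)$ (equivalently, $((1+T)^{?}-1)$ with the exponent a $p$-adic unit times a generator, hence $(T)$ up to a unit). Concretely I would cite the computation in Sharifi, \cite[Lemma~2.1]{Sha} and the surrounding discussion, or Ohta's results, rather than redo it: the point is precisely that the trivial zero of $L_p(s,\theta^{-1}\omega^2)$ at $\mfr{p}$ corresponds to the extra vanishing coming from this local factor.

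\medskip

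\noindent\textbf{Step 2: Deduce $X_{\Q(\zeta_{Np^{\infty}}),\Sigma,\chi}(1)_{\mfr{p}}=0$.} By the classical result that $X_{\Q(\zeta_{Np^{\infty}}),\chi}$ has no nonzero finite $\Lambda_{\theta}$-submodule (Ferrero, or Iwasawa's theorem on the absence of finite submodules for the minus part), $X_{\Q(\zeta_{Np^{\infty}}),\chi}$ is a torsion $\Lambda_{\theta}$-module with no finite submodule, hence so is its submodule $X_{\Q(\zeta_{Np^{\infty}}),\Sigma,\chi}$. Now the main conjecture (Mazur--Wiles) identifies the characteristic ideal of $X_{\Q(\zeta_{Np^{\infty}}),\chi}$ with $(L_p(s,\theta^{-1}\omega^2))$ up to the usual twist; since $\theta$ is exceptional this $p$-adic $L$-function has a trivial zero at $\mfr{p}$, i.e. it is divisible by exactly the first power of a generator of $\mfr{p}$. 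Combining with Step~1, the multiplicativity of characteristic ideals in the exact sequence $0\to X_{\Sigma,\chi}\to X_{\chi}\to X_{\chi}/X_{\Sigma,\chi}\to 0$ shows that the characteristic ideal of $X_{\Q(\zeta_{Np^{\infty}}),\Sigma,\chi}$ is coprime to $\mfr{p}$ — the single power of $\mfr{p}$ dividing $\mathrm{char}(X_\chi)$ is entirely accounted for by the quotient, whose characteristic ideal is $(T)$ and $\mfr{p}=(T+1-\kappa(\gamma))$, so one must check these are the ``same'' height-one prime, i.e. that the trivial zero sits at $\mfr{p}$; this is exactly the content of Section~\ref{sec:32} as referenced, or can be extracted from \cite{FG}. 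A torsion module with no finite submodule whose characteristic ideal is coprime to the height-one prime $\mfr{p}$ localizes to $0$ at $\mfr{p}$, and the Tate twist $(1)$ does not affect vanishing. Hence $X_{\Q(\zeta_{Np^{\infty}}),\Sigma,\chi}(1)_{\mfr{p}}=0$.

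\medskip

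\noindent\textbf{Main obstacle.} The delicate point is Step~1 together with the matching in Step~2: one must be careful that the ``extra'' zero of $L_p(s,\theta^{-1}\omega^2)$ responsible for the quotient $X_{\chi}/X_{\Sigma,\chi}$ being nonzero is located at precisely the prime $\mfr{p}=(T+1-\kappa(\gamma))$ and has multiplicity exactly one there, so that $X_{\Sigma,\chi}$ picks up none of it. This is where the hypothesis $\chi(p)=1$ (exceptionality) enters essentially: it forces the local Euler factor at $p$ to vanish at $\mfr{p}$, producing the trivial zero, and the Ferrero--Greenberg non-vanishing of $L_p'(0,\chi\omega)$ guarantees the order is exactly one. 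I would organize the proof so that the explicit identification of $\mfr{p}$ with the trivial zero is cited from Section~\ref{sec:32} (as the excerpt already flags), reducing the lemma to the standard descent computation plus the main conjecture.
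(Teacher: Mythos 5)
Your proposal follows the same overall contour as the paper's proof (reduce to showing the characteristic ideal of the quotient is exactly $(T)$, then use multiplicativity of characteristic ideals in the short exact sequence together with the simplicity of the trivial zero, and finish by observing that the Tate twist converts $(T)$ into $\mfr{p}$), but Step~1 contains a genuine gap. The claim that $X_{\Q(\zeta_{Np^\infty}),\chi}/X_{\Q(\zeta_{Np^\infty}),\Sigma,\chi}$ is ``a quotient of $\Z_p\lsem T\rsem$ by the ideal generated by $\kappa(\mathrm{Frob}_p)\langle\mathrm{Frob}_p\rangle-1$'' does not hold as stated: the primes of $\Q(\zeta_{Np^\infty})$ above $p$ are totally ramified in the cyclotomic direction, so there is no Frobenius at $p$ in that group, and the module in question is not presented as a cyclic quotient of $\Lambda_{\theta}$ by such a one\textendash variable local relation. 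What one can say is that the quotient is controlled by the subgroups $D_n$ of the class groups $A_n$ generated by the primes above $p$; since $\gamma$ fixes those primes, the Pontryagin dual of $\varinjlim D_n^-$ is annihilated by $T$, so its characteristic ideal is a power of $(T)$ — but pinning down the exponent requires the algebraic input, not the local--units heuristic. The paper resolves this cleanly by citing Ferrero--Greenberg \cite[Section~4, p.~99]{FG} for exactly two facts: (i) $X_\chi/X_{\Sigma,\chi}$ is a direct summand of the Pontryagin dual of $\varinjlim D_n^-$ whose characteristic ideal is a power of $(T)$, and (ii) $(T)$ divides the characteristic ideal of $X_\chi$ exactly once. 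Multiplicativity of characteristic ideals in $0\to X_{\Sigma,\chi}\to X_\chi\to X_\chi/X_{\Sigma,\chi}\to 0$ then gives the first assertion, and localizing $X_{\Sigma,\chi}$ at $(T)$ gives $0$; applying the Tate twist moves $(T)$ to $\mfr{p}$, which is the second assertion.

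Two further remarks. Your Step~2 invokes the Mazur--Wiles main conjecture to relate $\mathrm{char}(X_\chi)$ to $L_p$ and then uses Ferrero--Greenberg's analytic theorem that the trivial zero is simple; the paper avoids the main conjecture entirely by citing Ferrero--Greenberg's algebraic statement (ii) directly. This is not incorrect, but it is a heavier tool than needed. Also, the passage where you say that $(T)$ and $\mfr{p}=(T+1-\kappa(\gamma))$ are ``the same height-one prime'' that ``one must check'' is confusing as written: these are distinct primes of $\Lambda_\theta$, and the point is rather that the Tate twist $(1)$ carries the prime $(T)$ of the untwisted module to the prime $\mfr{p}$ of the twisted one. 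The appeal to the absence of finite $\Lambda$-submodules is also unnecessary: for any finitely generated torsion $\Lambda_\theta$-module $M$ and height-one prime $\mfr{q}$, the localization $M_\mfr{q}$ vanishes if and only if $\mathrm{char}(M)\not\subset\mfr{q}$, since pseudo-null modules localize to $0$ at height-one primes.
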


\begin{proof}
For $n\in \Z_{\geq 1}$, let $A_n$ be the $p$-part of the class group of the \textit{n}th-layer of the cyclotomic $\Z_p$-extension over $\Q$, and let $D_n$ be the subgroup of $A_n$ generated by all primes above $p$. From the discussion in the last paragraph of p.~99 in \cite[Section~4]{FG}, one can see that $X_{\Q(\zeta_{Np^{\infty}}),\chi}/X_{\Q(\zeta_{Np^{\infty}}),\Sigma,\chi}$ is a direct summand of the Pontryagin dual of the direct limit of $D_n^-$ whose characteristic ideal is a power of $(T)$. Since $\theta$ is exceptional, it was also shown in \textit{loc.~cit.}~that $(T)$ divides the characteristic ideal of $X_{\Q(\zeta_{Np^{\infty}}),\chi}$ exactly once. Hence, the characteristic ideal of $X_{\Q(\zeta_{Np^{\infty}}),\chi}/X_{\Q(\zeta_{Np^{\infty}}),\Sigma,\chi}$ is $(T)$. Thus, $X_{\Q(\zeta_{Np^{\infty}}),\Sigma,\chi}\otimes_{\Lambda_{\theta}} \Lambda_{\theta,(T)}=0$ which would imply the second assertion by taking the Tate twist.
\end{proof}

\begin{prop}\label{Brauer_structure}
We have isomorphisms of $\Lambda_{\theta,\p}$-modules
\[
S_{\theta,\mfr{p}}
\isom \big((\oplus^0_{v\in\Sigma}\Z_p)_{\chi}(1)\big)_{\p}
\isom \Lambda_{\theta,\p}/\p.
\]
\end{prop}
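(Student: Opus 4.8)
We want to show $S_{\theta,\mfr{p}} \cong \Lambda_{\theta,\mfr{p}}/\mfr{p} = k_{\theta,\mfr{p}}$, via the intermediate isomorphism with $\big((\oplus^0_{v\in\Sigma}\Z_p)_\chi(1)\big)_\mfr{p}$.

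Step 1: The first isomorphism. We localize the short exact sequence (3.xx) [the one labeled `eq:long_ext_seq_S_theta`] at $\mfr{p}$. Since localization is exact, Lemma \ref{vanishing_X_Sigma} (which gives $X_{\Q(\zeta_{Np^\infty}),\Sigma,\chi}(1)_\mfr{p} = 0$) immediately kills the left-hand term, so $S_{\theta,\mfr{p}} \isom \big((\oplus^0_{v\in\Sigma}\Z_p)_\chi(1)\big)_\mfr{p}$.

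Step 2: Identify $(\oplus^0_{v\in\Sigma}\Z_p)_\chi(1)$ as a $\Lambda_\theta$-module. Here one must understand the $(\Z/Np\Z)^\times \times (1+p\Z_p)$-action, equivalently the $\Gal(\Q(\zeta_{Np^\infty})/\Q)$-action, on $\oplus_{v\in\Sigma}\Z_p$. Since $\chi(p)=1$ (the exceptional hypothesis), the prime $p$ splits in $\Q(\zeta_N)$ into $\phi(N)$ primes... wait, more precisely the decomposition is governed by the order of $p$ in $(\Z/N\Z)^\times$; the set $\Sigma$ of places above $p$ in $\Q(\zeta_{Np^\infty})$ is a $\Gal$-set, and $\oplus_{v\in\Sigma}\Z_p = \Z_p[\Gal/\mathrm{(decomposition\ group)}] \cong \mathrm{Ind}$ of the trivial character from the decomposition subgroup. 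After taking the $\chi$-component, $(\oplus_{v\in\Sigma}\Z_p)_\chi$ should become a free rank-one module over $\Lambda_\theta/(\text{something})$; the key point is that the decomposition group at $p$ inside $\Gal(\Q(\zeta_{Np^\infty})/\Q(\zeta_N))$ is the full $\Gal(\Q(\zeta_{Np^\infty})/\Q(\zeta_N)) \cong 1+p\Z_p$ because $p$ is totally ramified in $\Q(\zeta_{p^\infty})/\Q$; since $\chi(p)=1$ the Frobenius-at-$p$ contribution on the prime-to-$p$ part is trivial too. Putting this together, $(\oplus_{v\in\Sigma}\Z_p)_\chi$ is $\Z_p[\theta]$ with the $\Gal(\Q(\zeta_{Np^\infty})/\Q(\zeta_{Np}))$-action trivial, i.e. $\Lambda_\theta/(T)$; the index-zero condition removes one more, so I expect $(\oplus^0_{v\in\Sigma}\Z_p)_\chi \cong \Lambda_\theta/(T, ?)$ — but rather than chase this I will argue on the quotient directly.

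Step 3: Compute the localization of $\big((\oplus^0_{v\in\Sigma}\Z_p)_\chi(1)\big)$ at $\mfr{p} = (T+1-\kappa(\gamma))$. The Tate twist $(1)$ twists the $\Lambda_\theta$-action by $\kappa$, i.e. multiplies $T$ (which corresponds to $\kappa(\gamma)-1$) by $\kappa(\gamma)$; so the operator annihilating $(\oplus^0_{v\in\Sigma}\Z_p)_\chi$, which was essentially multiplication by $T$, becomes multiplication by $T+1-\kappa(\gamma)$ after the twist — i.e. by a uniformizer of $\mfr{p}$. Hence $\big((\oplus_{v\in\Sigma}\Z_p)_\chi(1)\big)_\mfr{p}$ is killed by the maximal ideal of $\Lambda_{\theta,\mfr{p}}$, so it is a $k_{\theta,\mfr{p}}$-vector space; one then checks it is one-dimensional by a rank count — $\oplus^0$ has $\Z_p$-corank one less than $\oplus$, and the Galois action on the "difference" is the relevant one, giving exactly $\Lambda_{\theta,\mfr{p}}/\mfr{p}$. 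Alternatively, and more cleanly, I would use the exact sequence $0\to (\oplus^0)\to (\oplus)\to \Z_p\to 0$ of $\Lambda_\theta$-modules, take $\chi$-components and Tate twist, localize at $\mfr{p}$, and observe that $(\Z_p)_\chi(1)_\mfr{p}=0$ (as $\chi$ is nontrivial, or as the twist makes $T+1-\kappa(\gamma)$ act invertibly) while $(\oplus)_\chi(1)_\mfr{p}$ is computed from Step 2 to be $\cong \Lambda_{\theta,\mfr{p}}/\mfr{p}$, forcing $(\oplus^0)_\chi(1)_\mfr{p}\cong \Lambda_{\theta,\mfr{p}}/\mfr{p}$ as well.

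The main obstacle will be Step 2: pinning down precisely the $\Lambda_\theta$-module structure on $\oplus_{v\in\Sigma}\Z_p$ and its $\chi$-component, keeping careful track of how the decomposition and inertia groups at $p$ sit inside $\Gal(\Q(\zeta_{Np^\infty})/\Q)$ and how the exceptional conditions $\chi|_{(\Z/p\Z)^\times}=1$ and $\chi(p)=1$ make the relevant character trivial. Everything after that — the Tate twist bookkeeping and the localization at $\mfr{p}=(T+1-\kappa(\gamma))$ — is a direct computation once the module is identified.
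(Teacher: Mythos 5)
Your proposal is correct and follows essentially the same route as the paper: localize the exact sequence \eqref{eq:long_ext_seq_S_theta} and invoke Lemma~\ref{vanishing_X_Sigma} to kill the class-group term, then identify $(\oplus^0_{v\in\Sigma}\Z_p)_\chi$ as a rank-one $\Z_p[\chi]$-module with trivial $\Gamma$-action and compute that the Tate twist shifts the annihilator from $(T)$ to $\mfr{p}=(T+1-\kappa(\gamma))$. Your explicit decomposition-group analysis (total ramification of $p$ in $\Q(\zeta_{p^\infty})/\Q$ together with $\chi|_{(\Z/p\Z)^\times}=1$ and $\chi(p)=1$) just spells out why the paper's assertion $(\oplus^0_{v\in\Sigma}\Z_p)_\chi\cong\Z_p[\chi]$ holds, and your observation that $(\Z_p)_\chi=0$ for nontrivial $\chi$ correctly resolves your momentary hesitation about the superscript $0$.
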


\begin{proof}
The first isomorphism follows from Lemma~\ref{vanishing_X_Sigma} and \eqref{eq:long_ext_seq_S_theta}. We now prove the second isomorphism. Note that as $\Z_p[\chi]$-module, $(\oplus^0_{v\in \Sigma}\Z_p)_{\chi}$ is isomorphic to $\Z_p[\chi]$, and for any $\sigma\in \Gal(\Q(\zeta_p)/\Q)$, it acts on $(\oplus^0_{v\in \Sigma}\Z_p)_{\chi}(1)$ via $\omega(\sigma)$. Moreover, the topological generator $\gamma$ of $ \Gal((\Q(\zeta_{Np^{\infty}})/\Q(\zeta_{Np})))$  acts on $(\oplus^0_{v\in S}\Z_p)_{\chi}(1)$ via $\kappa(\gamma)$. Therefore, one has
\[
(\oplus^0_{v\in \Sigma}\Z_p)_{\chi}(1)\cong \Lambda_{\theta}/\p,
\]
and hence, the assertion follows by taking the localization at $\mfr{p}$.
\end{proof}

\subsection{Cohomology of modular curves}\label{sec:32}
We first recall some results of Lafferty \cite{Laf} and Ohta \cite{Oht2}, which will be used in the proof of Proposition~\ref{DM}. To do this, we start by recalling the definition of ordinary $\Lambda$-adic modular forms. For this topic, we refer the reader to \cite[Chapter~7]{hida} for more details.

Let $\cO$ be an extension of $\Z_p[\theta]$ containing all \textit{p}-power roots of unity. For $k\in \Z_{\geq 2}$ and for a character $\epsilon$ of $1+p\Z_p$ of order $p^{r-1}$ with $r\in \Z_{>0}$, the specialization map $v_{k,\epsilon}:\cO\lsem T \rsem \to \cO$ is defined by sending $T$ to $\epsilon(\kappa(\gamma))\kappa(\gamma)^{k-2}-1$. An ordinary $\Lambda$-adic modular form (resp.~cusp form) $\mathcal{F}$ of level $N$ and character $\theta$ is a formal power series 
$$
\mathcal{F}=\sum_{n=0}^{\infty} A_n(T;\mathcal{F}) e^{2\pi i nz}
$$
with $A_n(T;\mathcal{F})\in \cO\lsem T \rsem$ for all $n\in \Z_{\geq 0}$ such that $v_{k,\epsilon}(\mathcal{F})$ is an ordinary modular form (resp.~cusp form) of weight $k$, level $\Gamma_1(Np^r)$, and character $\theta\omega^{2-k}\epsilon$ for all $k\in \Z_{\geq 2}$ and for all but finitely many characters $\epsilon$ of $1+p\Z_p$ of order $p^{r-1}$. Recall that a normalized eigenform of weight $k$ and level $\Gamma_1(Np^r)$ is called ordinary if its \textit{p}-th Fourier coefficient is a $p$-adic unit. Following \cite[(1.4.2)]{Oht2}, an example of ordinary $\Lambda$-adic modular forms is given by the ordinary $\Lambda$-adic Eisenstein series
$$
\mathcal{E}(\theta,\mathbbm{1})=2^{-1} G_{\theta}(T)+\sum_{n=1}^{\infty} \left(\sum_{d|n, (d,p)=1} \theta(d) (1+T)^{s(d)} d\right) e^{2\pi i nz},
$$
where $s(d):=\tfrac{\log_p(d)}{\log_p(\kappa(\gamma))}$ and $G_{\theta}(T)\in \Lambda_{\theta}$ is the power series expression of the Kubota\textendash Leopoldt $p$-adic $L$-function $L_p(s,\theta\omega^2)$ satisfying
\begin{equation}\label{eq:const_Eisenstein_family_at_infty}
G_{\theta}(\kappa(\gamma)^s-1)=L_p(-s-1,\theta\omega^2).
\end{equation}
Recall that for an even Dirichlet character $\psi$ the Kubota\textendash Leopoldt $p$-adic $L$-function $L_p(s,\psi)$ satisfies the interpolation property 
$$
L_p(1-k,\psi)=(1-\psi\omega^{-k}(p)p^{k-1})L(1-k,\psi\omega^{-k})
$$
for all $k\in \Z_{>0}$.
Set $\xi_{\theta}(T):=G_{\theta^{-1}}((1+T)^{-1}-1)$.
Then, it follows from the assumption $\chi(p)=1$ that $T=\kappa(\gamma)-1$ is a zero of $\xi_{\theta}(T)$, called the trivial zero. Let $M^{\ord}(N,\theta;\cO\lsem T \rsem)$ be the space of ordinary $\Lambda$-adic modular forms of level $N$ and character $\theta$, and let $S^{\ord}(N,\theta;\cO\lsem T \rsem)$ be the subspace of $M^{\ord}(N,\theta;\cO\lsem T \rsem)$ consisting of ordinary $\Lambda$-adic cusp forms. Recall that we denote by $C_1(Np^r)$ the set of cusps for $\Gamma_1(Np^r)$. It was proved by Ohta \cite[(2.4.4)]{Oht2} that one has the following short exact sequence of free $\cO\lsem T \rsem$-modules
\begin{equation}\label{eq:surj_res}
0\to S^{\ord}(N,\theta;\cO\lsem T \rsem) \to M^{\ord}(N,\theta;\cO\lsem T \rsem) \xrightarrow{\Res} e\cdot \cO\lsem C_{\infty}\rsem\to 0, 
\end{equation}
where $e=\lim_{n\to \infty} U_p^{n!}$ is the ordinary projector, $\cO\lsem C_{\infty}\rsem:=\varprojlim_r \cO[C_1(Np^r)]$ is a free $\cO\lsem T \rsem$-module of finite rank for which the projective limit is with respect to the natural projection $C_1(Np^r)\surj C_1(Np^s)$ for $r\geq s$, and $\Res$ is the residue map ((2.4.9) in \textit{loc.~cit.}) for $\Lambda$-adic modular forms defined by 
$$
\Res(\mathcal{F}):=\varprojlim_{r} \left( \frac{1}{p^r-1}\sum_{c\in C_1(Np^r)}\left(\sum_{\epsilon} \mathrm{res}_{W_{Np^r}\cdot c} (v_{2,\epsilon}(\mathcal{F})|U_p^{-r})
\right)[c]\right).
$$
Here $W_{Np^r}:=\left(\begin{smallmatrix} 0 & -1 \\ Np^r & 0 \end{smallmatrix}\right)$ is the 
Atkin\textendash Lehner involution, the above second summand runs through all characters $\epsilon$ of $1+p\Z_p$ of order $p^{r-1}$, and $\mathrm{res}_c f$ is the residue of $f\tfrac{dq}{q}$ at the cusp $c$.

We denote by $M^{\ord}(N,\theta;\Lambda_{\theta})$ (resp.~$S^{\ord}(N,\theta;\Lambda_{\theta})$) the subspace of $M^{\ord}(N,\theta;\cO\lsem T \rsem)$ (resp.~$S^{\ord}(N,\theta;\cO\lsem T \rsem)$) consisting of ordinary $\Lambda$-adic modular forms (resp.~cusp forms) $\mathcal{F}$ with $A_n(T;\mathcal{F})$ in $\Lambda_{\theta}$ for all $n\in \Z_{\geq 0}$.
It was proved by Ohta \cite[Lemma~2.1.1]{Oht3} that $\cO\lsem T \rsem$ is a faithfully flat $\Lambda_{\theta}$-module. Therefore, one obtains from \eqref{eq:surj_res} a short exact sequence of free $\Lambda_{\theta}$-modules
$$
0\to S^{\ord}(N,\theta;\Lambda_{\theta}) \to M^{\ord}(N,\theta;\Lambda_{\theta}) \xrightarrow{\Res} e\cdot \Z_p[\theta]\lsem C_{\infty}\rsem\to 0.
$$

Let $e_{(\theta,\mathbbm{1})} \in e\cdot \Z_p[\theta]\lsem C_{\infty}\rsem$ be defined in \cite[Proposition~3.2.1]{Laf} such that $\Lambda_{\theta}\cdot e_{(\theta,\mathbbm{1})}$ is a direct summand of $e\cdot \Z_p[\theta]\lsem C_{\infty}\rsem$ and  $\Res(\mathcal{E}(\theta,\mathbbm{1}))=G_{\theta}(T) \cdot e_{(\theta,\mathbbm{1})}$. Let $M_{\Lambda_{\theta}}$ be the preimage of $\Lambda_{\theta}\cdot e_{(\theta,\mathbbm{1})}$ under the map $\Res$, and set $S_{\Lambda_{\theta}}:=S^{\ord}(N,\theta;\Lambda_{\theta})$. Then we obtain a short exact sequence of free $\Lambda_{\theta}$-modules
\begin{equation}\label{eq:modular_forms_seq}
    0\to S_{\Lambda_{\theta}}\to M_{\Lambda_{\theta}} \xrightarrow{\Res} \Lambda_{\theta}\cdot e_{(\theta,\mathbbm{1})} \to 0.
\end{equation}
Let $\cH_{\theta}=\Z_p[T_n\mid n\geq 1]$ be the Hecke algebra acting on $M_{\Lambda_{\theta}}$, and let $\mfr{h}_{\theta}$ be the cuspidal Hecke algebra action on $S_{\Lambda_{\theta}}$ defined in the same manner. Set $\mathcal{I}_{\theta}:=\Ann_{\cH_{\theta}}(\cE(\theta,\mathbbm{1}))$ and denote by $I_{\theta}$ the image of $\cI_{\theta}$ under the natural homomorphism $\cH_{\theta}\surj \mfr{h}_{\theta}$. Denote by $s':M_{\Lambda_{\theta}}\otimes_{\Lambda_{\theta}} Q(\Lambda_{\theta})\to S_{\Lambda_{\theta}}\otimes_{\Lambda_{\theta}} Q(\Lambda_{\theta})$ the unique Hecke equivariant splitting map and set $M_{\Lambda_{\theta},DM'}:=s'(M_{\Lambda_{\theta}})$. Here $Q(\Lambda_{\theta})$ denotes the quotient field of $\Lambda_{\theta}$. It was proved in Section~3 of \textit{loc.~cit.}~that one has isomorphisms of $\Lambda_{\theta}$-modules 
\begin{equation}\label{eq:Matt_thesis}
M_{\Lambda_{\theta},DM'}/S_{\Lambda_{\theta}}\isom  \Lambda_{\theta}/(G_{\theta}(T))\isom 
\mfr{h}_{\theta}/I_{\theta}.
\end{equation}

Recall that in Section~\ref{sec:02}, we set $H_{\theta}:= \varprojlim_r H^1_{\et}(X_1(Np^{r})_{/\ol{\Q}},\Z_p)^{\ord}_{\theta}$. By \cite[(4.3.12)]{Oht1}, one has the following short exact sequence of $\Lambda_{\theta}[G_{\Q}]$-modules
\begin{equation}\label{eq:Lambda_Eich_Shi_isom}
0 \to  H_{\theta} \to  \varprojlim_r H^1_{\et}(Y_1(Np^{r})_{/\ol{\Q}},\Z_p)^{\ord}_{\theta} \to e\cdot \Z_p[\theta]\lsem C_{\infty}\rsem(-1) 
\to 0.
\end{equation}
Let $\wt{H}_{\theta}\subset \varprojlim_r H^1_{\et}(Y_1(Np^{r})_{/\ol{\Q}},\Z_p)^{\ord}_{\theta}$ be the preimage of $\Lambda\cdot e_{(\theta,\mathbbm{1})}$. Then, one obtains from \eqref{eq:Lambda_Eich_Shi_isom} a short exact sequence of $\Lambda_{\theta}[G_{\Q}]$-modules
\begin{equation}\label{eq:coho_exact_seq}
0 \to H_{\theta} \to  \wt{H}_{\theta} \to \Lambda_{\theta}\cdot e_{(\theta,\mathbbm{1})}(-1)
\to 0.
\end{equation}
We denote by $\cH^*_{\theta}$ (resp.~$\mfr{h}^*_{\theta}$) the Hecke algebra acting on $\wt{H}_{\theta}$ (resp.~on $H_{\theta}$). Also, we denote by $\cI^*_{\theta}$ and $I^*_{\theta}$ the corresponding Eisenstein ideals in $\cH^*_{\theta}$ and $\mfr{h}^*_{\theta}$, respectively. Note that the last map in \eqref{eq:coho_exact_seq} commutes the action of $\cH^*_{\theta}$ on $\wt{H}_{\theta}$ and the action of $\cH_{\theta}$ on $\Lambda_{\theta}\cdot e_{(\theta,\mathbbm{1})}(-1)$. Moreover, by \eqref{eq:Matt_thesis}, we obtain an isomorphism of $\Lambda_{\theta}$-modules
\begin{equation}\label{eq:cong_mod_dual_hecke_algebra}
\mfr{h}^*_{\theta}/I^*_{\theta}\isom \Lambda_{\theta}/(\xi_{\theta})
\end{equation}
induced by the canonical isomorphism $\mfr{h}_{\theta}\isom \mfr{h}^*_{\theta}$ sending $U_q$ to $U^*_q$ for all $q|Np$, $T_{\ell}$ to $T^*_{\ell}$ and $\<\ell\>$ to $\<\ell\>^{-1}$ for all $\ell\nmid Np$.

The Drinfield\textendash Manin modification $\wt{H}_{\theta,DM}$ of $\wt{H}_{\theta}$ is defined as $\wt{H}_{\theta}\otimes_{\cH^*_{\theta}} \mfr{h}^*_{\theta}$. For each $r\in \Z_{>0}$, we denote by $s^r:H^1(Y_1(Np^r),\Q_p)\to H^1(X_1(Np^r),\Q_p)$ the Drinfield\textendash Manin splitting which induces a splitting map $s:\varprojlim_r H^1(Y_1(Np^r),\Q_p)\to \varprojlim_r H^1(X_1(Np^r),\Q_p)$.
Set $\wt{H}_{\theta,DM'}:= s(\wt{H}_{\theta})$. 

\begin{prop}\label{DM}
Let the notation be as above. We have $\Lambda_{\theta}$-module isomorphisms
\begin{equation}\label{eq:Drinfield_Manin1}
\widetilde{H}_{\theta,DM}/H_{\theta}\isom\Lambda_{\theta}/(\xi_{\theta})
\end{equation}
and 
\begin{equation}\label{eq:Drinfield_Manin2}
\widetilde{H}_{\theta,DM'}/H_{\theta}\isom\Lambda_{\theta}/(\xi_{\theta}).
\end{equation}
In particular, we have an isomorphism of $\Lambda_{\theta}$-modules $\widetilde{H}_{\theta,DM'}\isom \widetilde{H}_{\theta,DM}$, and the $\Lambda_{\theta}/(\xi_{\theta})$-module $\widetilde{H}_{\theta,DM}/H_{\theta}$ is generated by $\{0,\infty\}_{\theta,DM}$, where $\{0,\infty\}$ is the modular symbol attached to the cusps $0$ and $\infty$.
\end{prop}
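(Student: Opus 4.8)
The plan is to prove \eqref{eq:Drinfield_Manin1} by a base change of the exact sequence \eqref{eq:coho_exact_seq} along $\cH^*_{\theta}\surj\mfr h^*_{\theta}$, to prove \eqref{eq:Drinfield_Manin2} by transporting Lafferty's congruence module computation \eqref{eq:Matt_thesis} from $\Lambda$-adic modular forms to cohomology through the $\Lambda$-adic Eichler\textendash Shimura isomorphism, and to deduce $\wt H_{\theta,DM'}\isom\wt H_{\theta,DM}$ and the generation statement from these. For \eqref{eq:Drinfield_Manin1}: the last term $\Lambda_{\theta}\cdot e_{(\theta,\mathbbm{1})}(-1)$ of \eqref{eq:coho_exact_seq} is cyclic over $\cH^*_{\theta}$ with $\cH^*_{\theta}$ acting through the Eisenstein character, so it is $\isom\cH^*_{\theta}/\cI^*_{\theta}$ as an $\cH^*_{\theta}$-module. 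Writing $\mfr h^*_{\theta}=\cH^*_{\theta}/J$ with $J:=\Ann_{\cH^*_{\theta}}(H_{\theta})$ and tensoring \eqref{eq:coho_exact_seq} over $\cH^*_{\theta}$ with $\mfr h^*_{\theta}$, one uses that $H_{\theta}\otimes_{\cH^*_{\theta}}\mfr h^*_{\theta}=H_{\theta}$ (since $JH_{\theta}=0$) and that $(\cH^*_{\theta}/\cI^*_{\theta})\otimes_{\cH^*_{\theta}}\mfr h^*_{\theta}=\mfr h^*_{\theta}/I^*_{\theta}\isom\Lambda_{\theta}/(\xi_{\theta})$ by \eqref{eq:cong_mod_dual_hecke_algebra}, to obtain a right-exact sequence $H_{\theta}\to\wt H_{\theta,DM}\to\Lambda_{\theta}/(\xi_{\theta})\to 0$. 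For the injectivity of $H_{\theta}\to\wt H_{\theta,DM}$ I would use that the Drinfeld\textendash Manin splitting $s$ is $\cH^*_{\theta}$-equivariant, so that $J$ annihilates $\varprojlim_r H^1(X_1(Np^r),\Q_p)^{\ord}_{\theta}$ and $s$ descends to a map $\bar s\colon\wt H_{\theta,DM}\to\varprojlim_r H^1(X_1(Np^r),\Q_p)^{\ord}_{\theta}$ restricting to the identity on $H_{\theta}$; this would establish \eqref{eq:Drinfield_Manin1}.

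For \eqref{eq:Drinfield_Manin2} and the isomorphism: the map $\bar s$ just constructed restricts to a surjection $\wt H_{\theta,DM}\surj s(\wt H_{\theta})=\wt H_{\theta,DM'}$ that is the identity on $H_{\theta}$, so $\wt H_{\theta,DM'}/H_{\theta}$ is already a quotient of $\wt H_{\theta,DM}/H_{\theta}\isom\Lambda_{\theta}/(\xi_{\theta})$. For the opposite bound I would use the $\Lambda$-adic Eichler\textendash Shimura isomorphism of Ohta, which relates $\wt H_{\theta}$ to $M_{\Lambda_{\theta}}$ (and the $\Lambda_{\theta}$-dual of $S_{\Lambda_{\theta}}$), compatibly with the Hecke action (under $\mfr h_{\theta}\isom\mfr h^*_{\theta}$) and with the boundary map of \eqref{eq:coho_exact_seq} and the residue map $\Res$ of \eqref{eq:modular_forms_seq}; since $s$ and $s'$ are the unique Hecke-equivariant splittings over $Q(\Lambda_{\theta})$ of these two sequences, they correspond under this isomorphism, and Lafferty's computation \eqref{eq:Matt_thesis} of $M_{\Lambda_{\theta},DM'}/S_{\Lambda_{\theta}}\isom\Lambda_{\theta}/(G_{\theta}(T))$ then translates, with the same normalizations as in \eqref{eq:cong_mod_dual_hecke_algebra}, into $\wt H_{\theta,DM'}/H_{\theta}\isom\Lambda_{\theta}/(\xi_{\theta})$, which is \eqref{eq:Drinfield_Manin2}. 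The resulting surjection $\Lambda_{\theta}/(\xi_{\theta})\isom\wt H_{\theta,DM}/H_{\theta}\surj\wt H_{\theta,DM'}/H_{\theta}\isom\Lambda_{\theta}/(\xi_{\theta})$ is a surjective endomorphism of a Noetherian module, hence an isomorphism, so $\ker\bar s=0$ and $\wt H_{\theta,DM'}\isom\wt H_{\theta,DM}$.

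For the generation statement: since $\xi_{\theta}$ lies in $\mfr p$ and hence in the maximal ideal of $\Lambda_{\theta}$, the ring $\Lambda_{\theta}/(\xi_{\theta})$ is local, so by Nakayama's lemma the cyclic $\Lambda_{\theta}/(\xi_{\theta})$-module $\wt H_{\theta,DM}/H_{\theta}$ is generated by the class of $\{0,\infty\}_{\theta,DM}$ as soon as that class is nonzero modulo the maximal ideal; via the description of the quotient above this reduces to the statement that the boundary of $\{0,\infty\}$ is a $\Lambda_{\theta}$-module generator of $\Lambda_{\theta}\cdot e_{(\theta,\mathbbm{1})}(-1)$ (in particular, that $\{0,\infty\}\in\wt H_{\theta}$), which I would extract from the construction of $e_{(\theta,\mathbbm{1})}$ out of the cusps $0$ and $\infty$ in \cite[Proposition~3.2.1]{Laf}. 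The main obstacle is the compatibility invoked in the second paragraph: one must check that the $\Lambda$-adic Eichler\textendash Shimura isomorphism intertwines the boundary map of \eqref{eq:coho_exact_seq} with the residue map of \eqref{eq:modular_forms_seq} and respects the Drinfeld\textendash Manin splittings, so that Lafferty's explicit formula $\Res(\cE(\theta,\mathbbm{1}))=G_{\theta}(T)\,e_{(\theta,\mathbbm{1})}$ can be used. This is exactly where passing to the Drinfeld\textendash Manin modification is decisive: Ohta's method for non-exceptional $\theta$ computes the congruence module of \eqref{eq:coho_exact_seq} by integrally splitting an auxiliary exact sequence of Hecke modules, which is unavailable when $\theta$ is exceptional, whereas the rational Drinfeld\textendash Manin splitting exists unconditionally and reduces the computation to Lafferty's formula.
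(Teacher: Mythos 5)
Your plan follows essentially the same route as the paper's proof: establish \eqref{eq:Drinfield_Manin1} by tensoring \eqref{eq:coho_exact_seq} with $\mfr h^*_{\theta}$ over $\cH^*_{\theta}$, establish \eqref{eq:Drinfield_Manin2} by transporting Lafferty's computation \eqref{eq:Matt_thesis} to cohomology via Ohta's $\Lambda$-adic Eichler\textendash Shimura maps and the Drinfeld\textendash Manin splitting, and then deduce the comparison $\widetilde H_{\theta,DM}\isom\widetilde H_{\theta,DM'}$ and the generation statement. The structure, the key inputs, and the role of the Drinfeld\textendash Manin modification as the device that replaces Ohta's (unavailable) integral splitting argument are all the ones the paper uses.

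Two small remarks. First, on \eqref{eq:Drinfield_Manin2}: Ohta's Eichler\textendash Shimura maps $\wt H_{\theta}\to M_{\Lambda_{\theta}}$ and $H_{\theta}\to S_{\Lambda_{\theta}}$ are surjections, not isomorphisms (the ranks over $\Lambda_{\theta}$ do not match), so you cannot directly ``translate'' Lafferty's isomorphism $M_{\Lambda_{\theta},DM'}/S_{\Lambda_{\theta}}\isom\Lambda_{\theta}/(G_{\theta})$ into an isomorphism for $\wt H_{\theta,DM'}/H_{\theta}$. What the paper does instead is first invoke \cite[Lemma~1.1.4]{Oht1} to know that the congruence module $\wt H_{\theta,DM'}/H_{\theta}$ is cyclic, say $\isom\Lambda_{\theta}/(f)$, then obtain $f\mid\xi_{\theta}$ from the surjection $\wt H_{\theta,DM}\surj\wt H_{\theta,DM'}$ (using \eqref{eq:Drinfield_Manin1}) and $\xi_{\theta}\mid f$ from the surjection $\wt H_{\theta,DM'}/H_{\theta}\surj M_{\Lambda_{\theta},DM'}/S_{\Lambda_{\theta}}$ and \eqref{eq:Matt_thesis}; the two inclusions then force $(f)=(\xi_{\theta})$. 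Your plan already contains the upper bound (via the quotient map you mention before invoking Eichler\textendash Shimura), so you only need to weaken the Eichler\textendash Shimura step from ``isomorphism'' to ``surjection'' and add the cyclicity input from \cite[Lemma~1.1.4]{Oht1}. Second, for the generation statement the paper argues slightly more directly: $\{0,\infty\}_{\theta}$ is part of a $\Lambda_{\theta}$-basis of $\varprojlim_r H^1_{\et}(Y_1(Np^r)_{/\ol{\Q}},\Z_p)^{\ord}_{\theta}$ whose boundary lands in $\Lambda_{\theta}\cdot e_{(\theta,\mathbbm{1})}$, so $\{0,\infty\}_{\theta,DM}$ is part of a basis of $\wt H_{\theta,DM}$ and the quotient $\wt H_{\theta,DM}/H_{\theta}$ is visibly generated by it; your Nakayama argument reduces to the same observation about the boundary of $\{0,\infty\}$ and is fine.
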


\begin{proof}
We first prove the isomorphism (\ref{eq:Drinfield_Manin1}). Note that one has natural isomorphisms of $\Lambda_{\theta}$-modules $\cH^*_{\theta}/\cI^*_{\theta}\isom \cH_{\theta}/\cI_{\theta}\isom  \Lambda_{\theta}\cdot e_{\theta,\mathbbm{1}}$. Thus, the sequence (\ref{eq:coho_exact_seq}) yields the following short exact sequence of free $\Lambda_{\theta}$-modules
$$
0\map H_{\theta}\map \widetilde{H}_{\theta}\map \cH^*_{\theta}/\cI^*_{\theta} \map 0.
$$
By tensoring $\mfr{h}^*_{\theta}$ over $\cH^*_{\theta}$ on the above sequence, one obtains
$$
0\map H_{\theta}\map \widetilde{H}_{\theta,DM}\map \mfr{h}^*_{\theta}/I^*_{\theta} \map 0,
$$
which yields (\ref{eq:Drinfield_Manin1}) by \eqref{eq:cong_mod_dual_hecke_algebra}. 

Next, we prove the isomorphism \eqref{eq:Drinfield_Manin2}. It follows from \cite[Lemma~1.1.4]{Oht1} that the congruence module  $\widetilde{H}_{\theta,DM'}/H_{\theta}$ is isomorphic to $\Lambda_{\theta}/(f)$ for some $f\in \Lambda_{\theta}$. The Drinfield\textendash Manin splitting induces a surjective homomorphism of $\Lambda_{\theta}$-modules $\widetilde{H}_{\theta,DM}\surj \widetilde{H}_{\theta,DM'}$ (see the proof of \cite[Lemma~4.1]{Sha}), which would yield another surjective homomorphism of $\Lambda_{\theta}$-modules
$$
\widetilde{H}_{\theta,DM}/H_{\theta}\surj \widetilde{H}_{\theta,DM'}/H_{\theta}.
$$
Thus, it follows from \eqref{eq:Drinfield_Manin1} that $f|\xi_{\theta}$. We claim that one also has $\xi_{\theta}|f$, which implies \eqref{eq:Drinfield_Manin2} by the above discussion. It was proved by Ohta that one has surjective homomorphisms $\wt{H}_{\theta}\surj M_{\Lambda}$ and $H_{\theta}\surj S_{\Lambda}$ on which the action of $T^*_{\ell}$ and $U^*_q$ on the left commute with $T_{\ell}$ and $U_q$ on the right for all $\ell\nmid Np$ and for all $q|Np$. They induce surjective homomorphisms $\wt{H}_{\theta,DM'}\surj M_{\Lambda,DM'}$ and 
$\wt{H}_{\theta,DM'}/H_{\theta}\surj M_{\Lambda,DM'}/S_{\Lambda}$.
Thus, the claim follows from \eqref{eq:Matt_thesis}. The isomorphism $\widetilde{H}_{\theta,DM'}\isom \widetilde{H}_{\theta,DM}$ follows from \eqref{eq:Drinfield_Manin1}, \eqref{eq:Drinfield_Manin2}, and the Snake lemma.

Finally, we note that $\{0,\infty\}_{\theta}$ is part of a basis of $\varprojlim_r H^1_{\et}(Y_1(Np^{r})_{/\ol{\Q}},\Z_p)^{\ord}_{\theta}$ whose image under the boundary map is in $\Lambda_{\theta}\cdot e_{(\theta,\mathbf{1})}$. Therefore, $\{0,\infty\}_{\theta,DM}$ is part of a basis of $\wt{H}_{\theta,DM}$, and hence, $\wt{H}_{\theta,DM}/H_{\theta}$ is generated by $\{0,\infty\}_{\theta,DM}$.
\end{proof}

The following corollary will be used in Section~\ref{sec:04}. 

\begin{cor}\label{image_of_varpi}
The elements $\xi_{\theta}\{0,\infty\}_{\theta,DM}$ and $(1-U^*_q)\cdot \{0,\infty\}_{\theta,DM}$ for all $q|Np$ are in $H_{\theta}^-$. Moreover, we have
\begin{equation}\label{eq:image_of_varpi}
\varpi_{\theta}((1-U^*_q)\cdot \{0,\infty\}_{\theta,DM})=(q,1-\zeta_{Np^r})_{r\geq 1,\theta}\in S_{\theta}
\end{equation}
for all $q|Np$.
\end{cor}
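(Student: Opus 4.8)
The plan is to deduce Corollary~\ref{image_of_varpi} from Proposition~\ref{DM} together with the basic properties of $\varpi_\theta$ recalled in Section~\ref{sec:02}. First I would establish the integrality claims. By Proposition~\ref{DM}, the quotient $\wt H_{\theta,DM}/H_\theta$ is isomorphic to $\Lambda_\theta/(\xi_\theta)$ and is generated by the image of $\{0,\infty\}_{\theta,DM}$; hence $\xi_\theta\{0,\infty\}_{\theta,DM}$ lies in $H_\theta$. Since the map in \eqref{eq:coho_exact_seq} (the boundary/residue map) sends $\{0,\infty\}_{\theta,DM}$ to a generator of $\Lambda_\theta\cdot e_{(\theta,\mathbbm 1)}(-1)\isom \cH^*_\theta/\cI^*_\theta$ which is annihilated by $U^*_q-1$ for every $q\mid Np$ (by definition of the Eisenstein ideal $\cI^*_\theta$, whose image generates $\cI^*_\theta/\cI^*_\theta$... more precisely $U^*_q-1\in\cI^*_\theta$), the element $(1-U^*_q)\{0,\infty\}_{\theta,DM}$ maps to $0$ under the boundary map and therefore lies in $H_\theta$. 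To see that these elements actually lie in the minus part $H_\theta^-$, I would use that complex conjugation acts on the modular symbol $\{0,\infty\}$ by $-1$ (the symbol $\{0,\infty\}$ is anti-invariant; this is a standard property of the relative homology realization in \eqref{eq:coho_relative_homo}, cf.\ the adjusted Manin symbol conventions of \cite[Section~3.1]{Sha}), and that both $\xi_\theta$ and $1-U^*_q$ commute with complex conjugation, so the minus-part membership is preserved.

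Next I would compute $\varpi_\theta$ on these elements. Recall from Section~\ref{sec:02} that $\varpi_\theta$ is the $\theta$-component of the inverse limit of the maps $\varpi_r$, which are restrictions of $\wt\varpi_r$ to $H^1_{\et}(X_1(Np^r)_{/\ol\Q},\Z_p)^-(1)\subset \wt H_{r,0}^-(1)$. So it suffices to identify the finite-level contribution: for each $r$, the element $(1-U^*_q)\{0,\infty\}$ at level $Np^r$, viewed inside $\wt H_{r,0}^-(1)$ via \eqref{eq:coho_relative_homo}, is a $\Z_p$-linear combination of adjusted Manin symbols $[u,v]^+_r$, and I must show that $\wt\varpi_r$ sends it to the cup product $(q,1-\zeta_{Np^r})_r^+$ in $H^2(\Z[\zeta_{Np^r},\tfrac1p],\Z_p(2))^+$. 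Here I would invoke the known computation of $\varpi_r$ on Drinfeld--Manin-modified boundary symbols — this is exactly the content of \cite[Section~5.2]{FK} and \cite[Lemma~4.1, Proposition~4.12 and its proof]{Sha}, where the image of the modular symbol $\{0,\infty\}$ (suitably modified so as to land in the cuspidal part after applying $1-U^*_q$) under $\wt\varpi_r$ is expressed as a cup product with $q$ in place of $1-\zeta^u$ (the cusp $0$ corresponds to the ``$1-\zeta^0$'' degeneration which is replaced by the prime $q$ under the reciprocity/boundary computation). Passing to the inverse limit over $r$ and taking $\theta$-components gives \eqref{eq:image_of_varpi}.

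Concretely, the key input is the compatibility $\wt\varpi_r\circ\langle j\rangle_r=\sigma_j\circ\wt\varpi_r$ together with the explicit formula $[u,v]^+_r\mapsto (1-\zeta_{Np^r}^u,1-\zeta_{Np^r}^v)_r^+$, applied after writing $(1-U^*_q)\{0,\infty\}$ at level $Np^r$ in terms of Manin symbols; the operator $U^*_q$ acts on Manin symbols by an explicit formula (summing over a $\Gamma_0$-coset decomposition), and the cup product relation $(1-\zeta^u,1-\zeta^v)+(1-\zeta^v,1-\zeta^w)+\dots$ telescopes, with the Steinberg-symbol identity $(a,1-a)=0$ and the norm-compatibility collapsing the sum to a single cup product $(q,1-\zeta_{Np^r})_r$. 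I expect the main obstacle to be the bookkeeping in this last identification: one must carefully track the Drinfeld--Manin modification (which is what makes $(1-U^*_q)\{0,\infty\}$ cuspidal, i.e.\ lie in $H_\theta$ rather than merely $\wt H_\theta$) through the explicit Manin-symbol description and verify that no extraneous cup-product terms survive, matching conventions between \cite{Sha} and \cite{FK}. This is essentially a repackaging of \cite[proof of Proposition~4.12]{Sha} adapted to our $\Lambda$-adic exceptional setting, so once the finite-level statement is in place, the inverse limit and $\theta$-localization are formal.
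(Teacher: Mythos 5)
Your first paragraph (the integrality assertion) is essentially the paper's argument: using Proposition~\ref{DM} that $\widetilde H_{\theta,DM}/H_\theta\cong\Lambda_\theta/(\xi_\theta)$ is generated by the class of $\{0,\infty\}_{\theta,DM}$, plus the fact that $1-U_q^*$ lies in $I^*_\theta$, to get integrality; and the minus-part membership is a convention/twist bookkeeping exercise that the paper packages via the reference to \cite[Lemma~4.8]{Sha}. So that part is fine, if somewhat informally stated about the sign conventions hidden in $\wt H_r(1)^+\cong\wt H_r^-(1)$.

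The second half of your proposal, however, mislocates the mechanism of the cup-product computation. You invoke a ``telescoping'' of terms $(1-\zeta^u,1-\zeta^v)+(1-\zeta^v,1-\zeta^w)+\cdots$ and the Steinberg relation $(a,1-a)=0$, and you expect to replace the cusp $0$ by the prime $q$ through a reciprocity-type boundary argument. None of this is how the computation actually goes. For $q\mid N$, the correct route is: first $(1-U_q^*)\{0,\infty\}_r=\sum_{i=1}^{q-1}\{\tfrac{i}{q},\infty\}_r$, which is a direct consequence of the $U_q^*$-action on cusps; second, each $\{\tfrac{i}{q},\infty\}_r$ is a single Manin symbol $[iN'p^r,1]_r$ with $N'=N/q$, so applying $\wt\varpi_r$ gives
$$\sum_{i=1}^{q-1}\bigl(1-\zeta_{Np^r}^{iN'p^r},\,1-\zeta_{Np^r}\bigr)_r=\sum_{i=1}^{q-1}\bigl(1-\zeta_q^i,\,1-\zeta_{Np^r}\bigr)_r=\Bigl(\prod_{i=1}^{q-1}(1-\zeta_q^i),\,1-\zeta_{Np^r}\Bigr)_r=(q,1-\zeta_{Np^r})_r,$$
using only bilinearity in the first variable and the elementary identity $\prod_{i=1}^{q-1}(1-\zeta_q^i)=q$. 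There is no telescoping, and the Steinberg identity does not enter. Moreover, you treat $q=p$ and $q\mid N$ uniformly, whereas for $q=p$ the Manin-symbol decomposition of $(1-U_p^*)\{0,\infty\}$ is not of the same shape (one of the summands in $\sum_{i=0}^{p-1}\{\tfrac{i}{p},\infty\}$ is $\{0,\infty\}$ itself, and there is a compatibility issue with the projective limit over $r$), which is why the paper simply invokes \cite[Section~10.3]{FK} for that case rather than repeating the argument. As written, your proposal would not go through for $q=p$, and the $q\mid N$ case is described via identities that are not the ones used.
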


\begin{proof}
Note that $1-U^*_q$ is in $I^*_{\theta}$ for all $q|Np$. By Proposition~\ref{DM}, the proof of the first assertion is essentially the same as the proof of \cite[Lemma~4.8]{Sha}.
For $q=p$, \eqref{eq:image_of_varpi} was proved in \cite[Section~10.3]{FK} so it remains to deal with the case $q|N$. Given any $r\in \Z_{>0}$, by the definition of the Hecke actions on the set of cusps (see \cite[Section~2.1]{Oht3} for example), we have $(1-U_q^*)\cdot \{0,\infty\}_r=\sum_{i=1}^{q-1} \{\tfrac{i}{q},\infty\}_r$. From the discussion in \cite[Section~3.1]{Sha} and using (3.1)-(3.3) in \textit{loc.~cit.}, one can write the above modular symbols as Manin symbols. Namely, one has $\sum_{i=1}^{q-1}\{\tfrac{i}{q},\infty\}_r=\sum_{i=1}^{q-1} [N'p^r,1]_r$, where $N'=N/q$. Furthermore, by the definition of $\varpi$ (see Section~2.1), one has
$$
\varpi\left(\sum_{i=1}^{q-1} [N'p^r,1]_{r\geq 1}\right)
=\sum_{i=1}^{q-1} (1-\zeta_q^i,1-\zeta_{Np^r})_{r\geq 1}
=\left( \prod_{i=1}^{q-1} (1-\zeta_q^i),\zeta_{Np^r}\right)_{r\geq 1}
=(q,1-\zeta_{Np^r})_{r\geq 1}.
$$
Then, by taking $\theta$-components and taking Drinfield\textendash Manin modification, we obtain \eqref{eq:image_of_varpi} for all $q|N$.
\end{proof}

To close this section, we next show that $H^-_{\theta,\mfr{p}}/I^*_{\theta,\mfr{p}}H^-_{\theta,\mfr{p}}$ is a $1$-dimensional vector space over $k_{\theta,\mfr{p}}$.
It was proved by Ferrero\textendash Greenberg \cite{FG} that the trivial zero of Kubota\textendash Leopoldt $p$-adic $L$-functions is a simple zero if it exists. Therefore, $I^*_{\theta,\mfr{p}}$ is the maximal ideal of $\mfr{h}^*_{\theta,\mfr{p}}$ by \eqref{eq:cong_mod_dual_hecke_algebra}. We notice that $\mfr{h}^*_{\theta,\mfr{p}}$ and $\cT^{\cusp}$ in \cite{BDP} are isomorphic as $\Lambda_{\theta,\mfr{p}}$-modules but not identically the same since the former acts on the cohomology of modular curves and the latter acts on the space of ordinary cuspidal families. Also, the convention of the specialization maps are different, which can be seen by comparing \eqref{eq:const_Eisenstein_family_at_infty} with (39) in \textit{loc.~cit.} By Theorem~A(i) (or Theorem~3.5(ii)) in \textit{loc.~cit.}, one has $\Lambda_{\theta,\mfr{p}}\isom \mfr{h}^*_{\theta,\mfr{p}}$ and hence, one has $I^*_{\theta,\mfr{p}}=\mfr{p}$. One can deduce from this that $H_{\theta,\mfr{p}}$ is a free $\mfr{h}^*_{\theta,\mfr{p}}$-module of rank $2$, since it is known that $H_{\theta}$ is a torsion free $\mfr{h}^*_{\theta}$-module and $H_{\theta} \otimes_{\mfr{h}^*_{\theta}} Q(\mfr{h}^*_{\theta})$ is a $2$-dimensional vector space over $Q(\mfr{h}^*_{\theta})$, where $Q(\mfr{h}^*_{\theta})$ is the quotient field of $\mfr{h}^*_{\theta}$. Therefore, both $H_{\theta,\mfr{p}}^+$ and $H_{\theta,\mfr{p}}^-$ are free $\mfr{h}^*_{\theta,\mfr{p}}$-modules of rank $1$. The following proposition follows from the above discussion immediately.

\begin{prop}\label{source_of_varpi}
One has $\dim_{k_{\theta,\mfr{p}}} H^-_{\theta,\mfr{p}}/I^*_{\theta,\mfr{p}}H^-_{\theta,\mfr{p}}=1=\dim_{k_{\theta,\mfr{p}}} H^+_{\theta,\mfr{p}}/I^*_{\theta,\mfr{p}}H^+_{\theta,\mfr{p}}$.
\end{prop}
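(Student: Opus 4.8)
The plan is to reduce the statement entirely to the structure of the localized dual Hecke algebra $\mfr{h}^*_{\theta,\mfr{p}}$, feeding in the congruence module identity \eqref{eq:cong_mod_dual_hecke_algebra}, the Ferrero--Greenberg simplicity of the trivial zero, and the Gorenstein (in fact regular) property of the cuspidal Hecke algebra from \cite{BDP}. First I would pin down $\mfr{h}^*_{\theta,\mfr{p}}$ itself: by \eqref{eq:cong_mod_dual_hecke_algebra} one has $\mfr{h}^*_{\theta}/I^*_{\theta}\isom\Lambda_{\theta}/(\xi_{\theta})$, and since by \cite{FG} the trivial zero $T=\kappa(\gamma)-1$ of $\xi_{\theta}$ is simple, $\mfr{p}$ divides $(\xi_{\theta})$ exactly once, so after localizing at $\mfr{p}$ the quotient $\mfr{h}^*_{\theta,\mfr{p}}/I^*_{\theta,\mfr{p}}\isom\Lambda_{\theta,\mfr{p}}/(\xi_{\theta})\isom k_{\theta,\mfr{p}}$ is a field; hence $I^*_{\theta,\mfr{p}}$ is the maximal ideal of $\mfr{h}^*_{\theta,\mfr{p}}$. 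Invoking \cite[Theorem~A(i)]{BDP}, which gives $\Lambda_{\theta,\mfr{p}}\isom\mfr{h}^*_{\theta,\mfr{p}}$ once the two Hecke algebras and the two specialization conventions are matched up as in the discussion around \eqref{eq:const_Eisenstein_family_at_infty}, one concludes that $\mfr{h}^*_{\theta,\mfr{p}}$ is a discrete valuation ring and that $I^*_{\theta,\mfr{p}}=\mfr{p}$.

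Next I would prove that $H_{\theta,\mfr{p}}$ is free of rank $2$ over $\mfr{h}^*_{\theta,\mfr{p}}$ and then split off the $\pm$-parts. The inputs are that $H_{\theta}$ is torsion-free over $\mfr{h}^*_{\theta}$ and that $H_{\theta}\otimes_{\mfr{h}^*_{\theta}}Q(\mfr{h}^*_{\theta})$ is $2$-dimensional over $Q(\mfr{h}^*_{\theta})$ (the $\Lambda$-adic Eichler--Shimura picture together with Ohta's duality). Localizing at $\mfr{p}$ gives a torsion-free module of rank $2$ over the discrete valuation ring $\mfr{h}^*_{\theta,\mfr{p}}$, hence free of rank $2$. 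Since $p$ is odd, complex conjugation induces a direct sum decomposition $H_{\theta,\mfr{p}}=H^+_{\theta,\mfr{p}}\oplus H^-_{\theta,\mfr{p}}$, so each summand is projective, hence free, over the local ring $\mfr{h}^*_{\theta,\mfr{p}}$; as the generic ranks of $H^{\pm}_{\theta}$ are each $1$, both $H^+_{\theta,\mfr{p}}$ and $H^-_{\theta,\mfr{p}}$ are free of rank $1$. Reducing modulo $I^*_{\theta,\mfr{p}}$ then yields $H^{\pm}_{\theta,\mfr{p}}/I^*_{\theta,\mfr{p}}H^{\pm}_{\theta,\mfr{p}}\isom\mfr{h}^*_{\theta,\mfr{p}}/I^*_{\theta,\mfr{p}}\isom k_{\theta,\mfr{p}}$, which is $1$-dimensional, as claimed.

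The main obstacle is not the module-theoretic bookkeeping of the second step, which is automatic once $\mfr{h}^*_{\theta,\mfr{p}}$ is known to be a discrete valuation ring, but the careful transport of \cite[Theorem~A(i)]{BDP}: one must verify that the "big" cuspidal Hecke algebra of \cite{BDP} acting on ordinary cuspidal families agrees, as a $\Lambda_{\theta,\mfr{p}}$-algebra, with the Hecke algebra $\mfr{h}^*_{\theta,\mfr{p}}$ acting on the cohomology of modular curves, and that under this identification the prime $\mfr{p}$ and the Eisenstein ideal on the two sides correspond correctly despite the differing specialization conventions. Once this compatibility is established, regularity of $\mfr{h}^*_{\theta,\mfr{p}}$ does all the remaining work.
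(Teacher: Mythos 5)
Your proposal is correct and follows essentially the same route as the paper: use Ferrero--Greenberg simplicity plus the congruence module isomorphism \eqref{eq:cong_mod_dual_hecke_algebra} to see $I^*_{\theta,\mfr{p}}$ is maximal, invoke \cite[Theorem~A(i)]{BDP} to get $\mfr{h}^*_{\theta,\mfr{p}}\isom\Lambda_{\theta,\mfr{p}}$ (a DVR) with $I^*_{\theta,\mfr{p}}=\mfr{p}$, then deduce freeness of rank $2$ for $H_{\theta,\mfr{p}}$ from torsion-freeness and generic rank, split into $\pm$-parts of rank $1$, and reduce modulo $I^*_{\theta,\mfr{p}}$. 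You even flag the same subtlety the paper does, namely that $\mfr{h}^*_{\theta,\mfr{p}}$ and $\cT^{\cusp}$ of \cite{BDP} must be identified carefully due to differing conventions.
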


\section{Surjectivity of $\varpi_{\theta,\mfr{p}}$}\label{sec:04}
The goal of this section is to show that the image of $\varpi_{\theta,\mfr{p}}$ is non-trivial, which completes the proof of Theorem~\ref{11}. 

We first recall some useful formulas for later use. For positive integers $a$ and $M$ with $a<M$, the partial Riemann zeta function  $\zeta_{a\; (M)}(s)$ is defined by
$$
\zeta_{a\; (M)}(s):=\sum_{n\equiv a\bmod M} n^{-s}.
$$
It converges absolutely  when $\mathrm{Re}(s)>1$ and admits an holomorphic continuation on $\C-\{1\}$ which has a simple pole at $s=1$ with residue $1/M$ \cite[Section~1.3.1]{Ka}. 

\begin{lemma}\label{useful_formula_local_pairing}
Let the notation be as above. Then the following assertions hold.
\begin{enumerate}

\item If $a=a'p^r$ for some positive integer $a' < N$, then $\zeta_{a\; (Np^r)}(s)=p^{-rs} \zeta_{a'\; (N)}(s)$.
    

\item For any $g_N\in (\Z/N\Z)^{\times}$ and $g_{p^r}\in (\Z/p^r\Z)^{\times}$, we have
$$
\frac{\zeta_N^{g_N}\zeta_{p^r}^{g_{p^r}}}{\zeta_N^{g_N}\zeta_{p^r}^{g_{p^r}}-1}
=-\sum_{a\in \Z/Np^r\Z} \zeta_{a\; (Np^r)}(0) (\zeta_N^{g_N}\zeta_{p^r}^{g_{p^r}})^a.
$$

\item 
For $r\in \Z_{\geq 1}$, one has 
$$
\sum_{i\in (\Z/p^r\Z)^{\times}} \zeta_{p^r}^i=
\begin{cases}
-1 & \mbox{ if } r=1,\\
0  & \mbox{ otherwise}.
\end{cases}
$$
\end{enumerate}
\end{lemma}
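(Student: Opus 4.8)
\textbf{Proof proposal for Lemma~\ref{useful_formula_local_pairing}.}

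The three assertions are elementary manipulations with Dirichlet series and roots of unity, and the plan is to treat each in turn. For (1), I would substitute $a = a'p^r$ directly into the defining series: the congruence $n \equiv a'p^r \bmod Np^r$ forces $p^r \mid n$, so writing $n = p^r m$ transforms $\sum_{n \equiv a'p^r (Np^r)} n^{-s}$ into $\sum_{m \equiv a' (N)} (p^r m)^{-s} = p^{-rs}\zeta_{a'\,(N)}(s)$, first for $\mathrm{Re}(s) > 1$ and then everywhere by analytic continuation. For (3), I would recall that $\sum_{i \in \Z/p^r\Z} \zeta_{p^r}^i = 0$ for $r \geq 1$, and that the sum over non-units is $\sum_{j \in \Z/p^{r-1}\Z} \zeta_{p^{r-1}}^{j}$, which is $0$ if $r \geq 2$ and equals $1$ if $r = 1$; subtracting gives the stated values. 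These two parts are routine and I expect no obstacle.

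The substantive part is (2), which is the partial-fraction / Hurwitz-zeta identity underlying the local reciprocity computation. The idea is to start from the geometric-series expansion: for a root of unity $\zeta \neq 1$ of order dividing $Np^r$, one has the formal identity $\frac{\zeta}{\zeta - 1} = \frac{-1}{1 - \zeta} = -\sum_{k \geq 0} \zeta^{k}$ in a suitable Abel-summation/analytic-continuation sense, and more precisely $\frac{\zeta}{\zeta-1} = -\sum_{a \in \Z/Np^r\Z} c_a \zeta^a$ where $c_a$ is obtained by grouping the terms $k \equiv a \bmod Np^r$. The coefficient attached to the class $a$ is then $\sum_{k \equiv a (Np^r)} 1$ regularized, i.e. $\zeta_{a\,(Np^r)}(0)$, using the standard fact that the partial zeta function at $s = 0$ is the Abel-regularized value of $\sum_{n \equiv a} 1$. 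Concretely I would set $\zeta = \zeta_N^{g_N}\zeta_{p^r}^{g_{p^r}}$ and define $F(s) = \sum_{a} \zeta_{a\,(Np^r)}(s)\zeta^a = \sum_{n \geq 1} n^{-s}\zeta^n$, which converges for $\mathrm{Re}(s) > 0$ since $\zeta \neq 1$ (it is, up to normalization, a Lerch zeta / polylogarithm $\mathrm{Li}_s(\zeta)$), extends holomorphically past $s = 0$, and satisfies $F(0) = \sum_{n \geq 1}\zeta^n$ in the Abel sense $= \frac{\zeta}{1-\zeta} = -\frac{\zeta}{\zeta - 1}$. Rearranging yields the claimed formula.

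The main obstacle is making the regularization in (2) rigorous: one must justify interchanging the finite sum over $a \in \Z/Np^r\Z$ with the analytic continuation in $s$, and identify $\lim_{s \to 0^+}\sum_{n \equiv a (Np^r)} n^{-s}\zeta^{n}$ correctly as $\zeta_{a\,(Np^r)}(0)$ times the appropriate phase — but since the outer sum is finite this is harmless once each $\zeta_{a\,(Np^r)}(s)$ is known to continue to $s = 0$ (which is recalled just before the lemma), and the evaluation $\sum_{n\geq 1}\zeta^n = \frac{\zeta}{1-\zeta}$ for $\zeta$ a nontrivial root of unity is the Abel limit of the geometric series. I would cite the standard reference for Hurwitz/partial zeta values at $s = 0$ (e.g. the computation in \cite[Section~1.3.1]{Ka} already invoked above) rather than reprove it. With that in hand the three identities follow in a few lines.
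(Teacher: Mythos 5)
Your parts (1) and (3) follow the paper's own argument closely: in (1) the paper likewise substitutes $n=a'p^r+iNp^r$ and factors out $p^{-rs}$, and in (3) it likewise compares $\sum_{i=0}^{p^r-1}\zeta_{p^r}^i=0$ with the sub-sum over multiples of $p$.

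For (2) your route genuinely diverges from the paper's. The paper gives no argument at all: it simply invokes \cite[Lemma~1.3.15(1)]{Ka} with $r=1$ and $t=\zeta_N^{g_N}\zeta_{p^r}^{g_{p^r}}$ and moves on. You instead reconstruct the content of that reference via the Lerch/polylogarithm function $F(s)=\sum_a\zeta_{a\,(Np^r)}(s)\zeta^a=\sum_{n\ge 1}n^{-s}\zeta^n$ and its value at $s=0$. This is a sound and self-contained path, and it has the advantage of making transparent \emph{why} the partial zeta values at $s=0$ appear. Two small remarks on your execution. First, the ``phase'' you worry about does not actually arise: for $n\equiv a\pmod{Np^r}$ one has $\zeta^n=\zeta^a$ exactly, so the inner sum is literally $\zeta^a\,\zeta_{a\,(Np^r)}(s)$ and there is nothing to interchange beyond a finite sum of functions each holomorphic at $s=0$. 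Second, you slightly conflate two regularizations: $F(0)$ is by definition the value of the analytic continuation in $s$, which is not a priori the Abel ($x\to 1^-$) limit of $\sum\zeta^n x^n$. The cleanest way to close this is the route the paper's reference takes (and which you could also do directly): use $\zeta_{a\,(M)}(0)=\tfrac12-\tfrac aM$ for $1\le a\le M$ and the elementary identities $\sum_{a=1}^{M}\zeta^a=0$, $\sum_{a=1}^{M}a\zeta^a=\tfrac{M\zeta}{\zeta-1}$ for a primitive $M$-th root of unity $\zeta$, which together give the right-hand side equal to $\tfrac{\zeta}{\zeta-1}$ with no Abelian-theorem input at all. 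With either fix your argument is complete; as written there is a small gap in identifying $F(0)$ that you flagged yourself and would plug by citation, which is exactly what the paper does.
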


\begin{proof}
Note that for each positive integer $n$ with $n\equiv a \bmod Np^r$, one can write $n$ as $a+iNp^r$ for some $i\in \Z_{\geq 0}$. Suppose $a=a'p^r$ for some positive integer $a'<N$. Then, one has 
\[
\begin{split}
\zeta_{a\; (Np^r)}(s) 
= \sum_{n\equiv a'p^r \bmod Np^r} n^{-s}
= \sum_{i=0}^{\infty} (a'p^r+iNp^r)^{-s}
= p^{-rs} \sum_{i=0}^{\infty} (a'+iN)^{-s}
=p^{-rs} \zeta_{a'\; (N)}(s).
\end{split}
\]
This proves the assertion (1). The assertion (2) follows from \cite[Lemma~1.3.15(1)]{Ka} by taking $r=1$ and $t=\zeta_N^{g_N}\zeta_{p^r}^{g_{p^r}}$.
To prove the assertion (3), it is known that $\sum_{i=0}^{p^r-1}\zeta_{p^r}^i=0$. When $r=1$, this implies that $\sum_{i=1}^{p-1}\zeta_{p}^i=-1$. If $r\in \Z_{>1}$, then one has
$$
\sum_{p|i,1\leq i\leq p^r-1} \zeta_{p^r}^i
=\sum_{i=1}^{p^{r-1}-1}\zeta_{p^{r-1}}
=-1.
$$
which implies $\sum_{i\in (\Z/p^r\Z)^{\times}} \zeta_{p^r}^i=0$. 
\end{proof}

For each $r\in \Z_{\geq 1}$ and each $q|N$, the cup product $(q,1-\zeta_N^{p^{-r}}\zeta_{p^r})_{r,\theta,\mfr{p}}$ can be identified via the first isomorphism in Proposition~\ref{Brauer_structure} with the local Hilbert symbol which will be computed by the local explicit reciprocity law in the following theorem.

\begin{thm}\label{local_pairing}
For each positive integer $r$ and each prime $\ell |N$, we have
$$
(\ell ,1-\zeta_N^{p^{-r}}\zeta_{p^r})_{r,\theta,\mfr{p}}=\frac{(p-1)\log_p (\ell) }{p \phi(N)} \tau(\chi^{-1}) L(0,\chi)\in (\cO/p^r\cO)(1).
$$
\end{thm}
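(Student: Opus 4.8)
The plan is to compute the local Hilbert symbol $(\ell, 1-\zeta_N^{p^{-r}}\zeta_{p^r})_{r,\theta,\mfr{p}}$ at the place $v$ of $\Q(\zeta_{Np^r})$ above $p$ by pushing everything into $\Q_p(\zeta_{p^r})$ and invoking an Iwasawa-type explicit reciprocity law (the local cyclotomic pairing formula; see the Coleman/Kato-style computation, e.g.\ \cite[Section~1.3]{Ka}). First I would observe that since $\chi$ has conductor $N$ prime to $p$ and $\chi(p)=1$, the place $v$ lies over $p$ with $\Q(\zeta_{Np^r})_v\cong \Q_p(\zeta_{p^r})$, and $\ell$ is a unit in this local field; the element $1-\zeta_N^{p^{-r}}\zeta_{p^r}$ is a unit of the local ring whose image I must understand after passing to the $\theta$-component and localizing at $\mfr{p}$. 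Taking the $\theta$-component amounts to averaging against $\theta^{-1}$ over $\Gal(\Q(\zeta_{Np})/\Q)$, i.e.\ against $\chi^{-1}$ on the tame part and $\omega^{-1}$ on the wild part, which is precisely where the Gauss sum $\tau(\chi^{-1})$ will appear and where the $p$-adic $L$-value $L_p(s,\chi\omega)$ enters through the Coleman map.

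The key computational steps, in order, would be: (i) rewrite $\ell$ using $\log_p$, so that the Hilbert symbol becomes $\log_p(\ell)$ times the symbol $(c, 1-\zeta_N^{p^{-r}}\zeta_{p^r})$ for a suitable norm-compatible system / the cyclotomic character direction, reducing the problem to evaluating a single canonical pairing; (ii) expand $1-\zeta_N^{p^{-r}}\zeta_{p^r}$ (or rather its logarithmic derivative / Coleman power series) as a power series in $\zeta_{p^r}$, using the rational-function identity of Lemma~\ref{useful_formula_local_pairing}(2) with $t=\zeta_N^{p^{-r}}\zeta_{p^r}$ to express $\zeta_N^{p^{-r}}\zeta_{p^r}/(\zeta_N^{p^{-r}}\zeta_{p^r}-1)$ as $-\sum_a \zeta_{a\,(Np^r)}(0)(\zeta_N^{p^{-r}}\zeta_{p^r})^a$; (iii) feed this into the explicit reciprocity law, which produces a sum of partial zeta values $\zeta_{a\,(Np^r)}(0)$ weighted by $\theta^{-1}(a)$ (times roots of unity); (iv) use Lemma~\ref{useful_formula_local_pairing}(1) to replace $\zeta_{a\,(Np^r)}(0)$ by $\zeta_{a'\,(N)}(0)$ on the relevant terms (the terms where the $p$-part of $a$ is "degenerate"), and use Lemma~\ref{useful_formula_local_pairing}(3) to kill the contributions of the $p$-primitive part of the sum except for the $r=1$-type leftover, which is exactly what yields the $(p-1)/p$ factor; (v) recognize the resulting character sum $\sum_{a'\bmod N}\chi^{-1}(a')\zeta_{a'\,(N)}(0)$ as $-\frac{1}{?}\tau(\chi^{-1})L(0,\chi)$ up to the explicit constant, via the standard relation between partial zeta values at $0$, Gauss sums, and Dirichlet $L$-values $L(0,\chi)$; and (vi) assemble the constants $\frac{p-1}{p\phi(N)}$, where $\phi(N)$ comes from the normalization of the $\theta$-projector (averaging over $(\Z/N\Z)^\times$). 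Finally I would check that localization at $\mfr{p}$ (i.e.\ specializing $T\mapsto \kappa(\gamma)-1$, the trivial-zero point) does not alter the value: since we are working at a fixed finite level $r$ and the pairing lands in $(\cO/p^r\cO)(1)$, the $\mfr{p}$-localization is harmless once we track that the identification in Proposition~\ref{Brauer_structure} is compatible with the Tate twist and the $\omega$-action of $\Gal(\Q(\zeta_p)/\Q)$.

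The main obstacle I expect is step (iii)–(iv): correctly applying the explicit reciprocity law with the right normalization and carefully separating the sum over $a\in\Z/Np^r\Z$ into its tame part $a'\bmod N$ and wild part $b\bmod p^r$, so that the wild summation collapses (via Lemma~\ref{useful_formula_local_pairing}(3)) to produce exactly the factor $(p-1)/p$ and not some other power of $p$. A secondary subtlety is bookkeeping the Gauss sum: the $\theta^{-1}=\chi^{-1}\omega^{-1}$ projection twists $\zeta_N^{p^{-r}}$ by $p^{-r}$, and one must verify that the $p^{-r}$ twist of the argument exactly cancels against a $p^r$ from the norm/trace so that $\tau(\chi^{-1})$ (and not $\tau(\chi^{-1})$ times a power of $p$ or an extra root of unity) is what survives; this is where the primitivity of $\chi$, hence the non-degeneracy $|\tau(\chi^{-1})|^2 = N$ and $\tau(\chi^{-1})\in\Z_p[\theta]^\times$, is used. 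Everything else — the holomorphic continuation and residue of $\zeta_{a\,(M)}(s)$, the commutation of the diamond operators with Galois, and the freeness statements — is available from the cited results and the earlier parts of the paper.
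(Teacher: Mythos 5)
Your proposal matches the paper's proof in essence: both compute the $\theta$-projected local Hilbert symbol by writing down the Coleman power series for $\zeta_N^{G_N}(1+T)^{\omega(g_p)}-1$, applying the Iwasawa/de Shalit explicit reciprocity law to reduce to a trace of $\log_p(\ell)$ times $t/(t-1)$, expanding via Lemma~\ref{useful_formula_local_pairing}(2) into partial zeta values, and then separating the tame and wild parts of the sum over $a\in\Z/Np^r\Z$ using orthogonality of $\chi$, Lemma~\ref{useful_formula_local_pairing}(3), and Lemma~\ref{useful_formula_local_pairing}(1) to isolate $\tau(\chi^{-1})L(0,\chi)$. One small bookkeeping slip: in the paper the numerator $(p-1)$ arises from collapsing the $g_p$-sum after the substitution $G_{p^r}=\omega(g_p)g_{p^r}$, and the $1/p$ comes from $\phi(p^r)/p^r$ on the $p^r\mid a$ terms, rather than from an ``$r=1$-type leftover'' of the wild sum as you suggest, but the ingredients and their roles are otherwise identified correctly.
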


\begin{proof}
Note that the action of $(\Z/p\Z)^{\times}$ on the group of $p^r$th roots of unity  is given by $a\cdot \zeta_{p^r}=\zeta_{p^r}^{\omega(a)}$ for all $a\in (\Z/p\Z)^{\times}$. Then, one has
$$
(\ell ,1-\zeta_N^{p^{-r}}\zeta_{p^r})_{r,\theta,\mfr{p}}
= \phi(Np)^{-1}\sum_{g_N,g_p}\chi(g_N^{-1})\omega(g_p^{-1})(\ell ,1-\zeta_N^{p^{-r}g_N}\zeta_{p^r}^{\omega(g_p)})_{r,\mfr{p}}.
$$
Here $g_N$ and $g_p$ run through all elements in $(\Z/N\Z)^{\times}$ and $(\Z/p\Z)^{\times}$, respectively. Set $G_N=p^{-r} g_N$. Using the assumption that $\chi(p)=1$, one can rewrite the above summation as
\begin{equation}\label{eq:local_paring_1}
 \phi(Np)^{-1}\sum_{G_N,g_p}\chi(G_N^{-1})\omega(g_p^{-1})(\ell ,1-\zeta_N^{G_N}\zeta_{p^r}^{\omega(g_p)})_{r,\mfr{p}}.
\end{equation}
Here $G_N$ runs through all elements in $(\Z/N\Z)^{\times}$, since it follows from the assumption $p\nmid \phi(N)$ that $p^r\in (\Z/N\Z)^{\times}$.

Set $\beta_{Np^r}=\zeta_N^{G_N}\zeta_{p^r}^{\omega(g_p)}-1$. The Coleman power series $g_{\beta_{Np^r}}(T)$ associated to $\beta_{Np^r}$ is defined by 
$$
g_{\beta_{Np^r}}(T)=\zeta_N^{G_N}(1+T)^{\omega(g_p)}-1.
$$
For the definition of Coleman power series, we refer the reader to \cite[Section~4]{tsuji} (or see \cite[Section~1]{deS}). Then, one has $g_{\beta_{Np^r}}(\zeta_{p^r}-1)=\beta_{Np^r}$. Set $\delta g_{\beta_{Np^r}}:=(1+T)\tfrac{d g_{\beta_{Np^r}}(T)}{dT}\times g_{\beta_{Np^r}}(T)^{-1}$. Then, one has
$$
\delta g_{\beta_{Np^r}}(\zeta_{p^r}-1)=\omega(g_p)\tfrac{\zeta_N^{G_N}\zeta_{p^r}^{\omega(g_p)}}{\zeta_N^{G_N}\zeta_{p^r}^{\omega(g_p)}-1}.
$$
By the local explicit reciprocity law \cite[Theorem~8.18]{Iwa} or \cite[Theorem~I.4.2]{deS} (taking  $\lambda(X)=1+X$) one can write \eqref{eq:local_paring_1} as
\begin{equation}\label{eq:local_paring_2}
 \frac{1}{p^r \phi(Np)}\sum_{G_N,g_p}\chi(G_N^{-1})\omega(g_p^{-1}) \Tr_{\Q_p(\mu_{p^r})/\Q_p}\left(\log_p (\ell) \cdot \omega(g_p) \frac{\zeta_N^{G_N}\zeta_{p^r}^{\omega(g_p)}}{\zeta_N^{G_N}\zeta_{p^r}^{\omega(g_p)}-1}\right).
\end{equation}
One can simplify \eqref{eq:local_paring_2} by computing the trace as
\begin{equation}\label{eq:local_paring_3}
\frac{\log_p (\ell)}{p^r \phi(Np)}\sum_{G_N,g_p}\chi(G_N^{-1}) \sum_{g_{p^r}\in (\Z/p^r\Z)^{\times}} \frac{\zeta_N^{G_N}\zeta_{p^r}^{\omega(g_p)g_{p^r}}}{\zeta_N^{G_N}\zeta_{p^r}^{\omega(g_p)g_{p^r}}-1}.
\end{equation}
Note that for a fixed $g_p\in (\Z/p\Z)^{\times}$, $\omega(g_p)\cdot (\Z/p^r\Z)^{\times}=(\Z/p^r\Z)^{\times}$. Hence, by setting $G_{p^r}=\omega(g_p)g_{p^r}$, one can simplify \eqref{eq:local_paring_3} as
\begin{equation}\label{eq:local_pairing_4}
\frac{(p-1)\log_p (\ell)}{p^r \phi(Np)}\sum_{G_N}\chi(G_N^{-1}) \sum_{G_{p^r}\in (\Z/p^r\Z)^{\times}} \frac{\zeta_N^{G_N}\zeta_{p^r}^{G_{p^r}}}{\zeta_N^{G_N}\zeta_{p^r}^{G_{p^r}}-1}.
\end{equation}
By Lemma~\ref{useful_formula_local_pairing}(2), one can write \eqref{eq:local_pairing_4} as
\begin{equation}\label{eq:local_pairing_5}
\frac{(p-1)\log_p (\ell)}{p^r \phi(Np)}\sum_{G_N,G_{p^r}}\chi(G_N^{-1})\sum_{a\in \Z/Np^r\Z} \zeta_{a\; (Np^r)} (0) (\zeta_N^{G_N}\zeta_{p^r}^{G_{p^r}})^a.
\end{equation}

One can rewrite \eqref{eq:local_pairing_5} in terms of $L(0,\chi)$ by the following observation. 
\begin{itemize}
\item If $a$ is divisible by $N$, then 
$$
\sum_{G_N,G_{p^r}}\chi(G_N^{-1}) \zeta_{a\; (Np^r)} (0) (\zeta_N^{G_N}\zeta_{p^r}^{G_{p^r}})^a
= \sum_{G_N}\chi(G_N^{-1}) \sum_{G_{p^r}} \zeta_{a\; (Np^r)} (0) \zeta_{p^r}^{a G_{p^r}}
=0,
$$
since $\sum_{G_N}\chi(G_N^{-1})=0$. 
\end{itemize}
Now, we assume that $a$ is not divisible by $N$. By setting $G'_N=G_N\cdot a$, we have
\[
\begin{split}
\sum_{G_N,G_{p^r}}\chi(G_N^{-1}) \zeta_{a\; (Np^r)} (0) (\zeta_N^{G_N}\zeta_{p^r}^{G_{p^r}})^a
=& \chi(a)\zeta_{a\; (Np^r)}(0) \left(\sum_{G'_N} \chi^{-1}(G'_N)\zeta_N^{G'_N}\right) \left(\sum_{G_{p^r}} \zeta_{p^r}^{aG_{p^r}}\right)\\
=& \chi(a)\zeta_{a\; (Np^r)}(0) \tau(\chi^{-1}) \sum_{G_{p^r}} \zeta_{p^r}^{aG_{p^r}}.
\end{split}
\]
\begin{itemize}
\item If $a$ is divisible by $p^i$ for some $0\leq i\leq r-1$, by Lemma~\ref{useful_formula_local_pairing}(5), one has $\sum_{G_{p^r}} \zeta_{p^r}^{aG_{p^r}}=0$.

\item If $a$ is divisible by $p^r$, namely, $a=ip^r$ for some $1\leq i\leq N-1$, then by Lemma~\ref{useful_formula_local_pairing}(1) and using the assumption that $\chi(p)=1$, one has 
$$
\chi(a)\zeta_{a\; (Np^r)}(0) \tau(\chi^{-1}) \sum_{G_{p^r}} \zeta_{p^r}^{aG_{p^r}}
=\phi(p^r) \tau(\chi^{-1}) \chi(i) \zeta_{i\; (N)}(0).
$$
\end{itemize}

It is known that the Dirichlet $L$-function $L(s,\chi)$ satisfies
$$
L(s,\chi)=\sum_{i=1}^{N-1} \chi(i) \zeta_{i\; (N)}(s)
$$
and has analytic continuation to the whole complex plane as $\chi$ is not a trivial character (for example, see \cite[Ch.~4]{Wa}). From the above discussion, one can simplify \eqref{eq:local_pairing_5} as
$$
\frac{\phi(p^r)(p-1)\log_p (\ell)}{p^r \phi(Np)} \tau(\chi^{-1}) \sum_{i\in \Z/N \Z} \chi(i) \zeta_{i\; (n)} (0)
= \frac{(p-1)\log (\ell)}{p \phi(N)} \tau(\chi^{-1}) L(0,\chi).\qedhere
$$
\end{proof}

\begin{cor}\label{cup_product_map}
We have
\begin{equation}\label{eq:cup_with_l}
(\ell ,1-\zeta_{Np^r})_{r,\theta,\mfr{p}}=\frac{(p-1)\log_p (\ell)}{p \phi(N)} \omega(N)\tau(\chi^{-1}) L(0,\chi)\in (\cO/p^r\cO)(1) 
\end{equation}
for all $\ell |N$ and
\begin{equation}\label{eq:cup_product_with_p}
(p,1-\zeta_{Np^r})_{r,\theta,\mfr{p}}=\frac{(p-1)}{p \phi(N)} \omega(N)\tau(\chi^{-1})\cL(\chi) L(0,\chi)\in (\cO/p^r\cO)(1).
\end{equation}
In particular, for all prime $q|Np$, $(q,1-\zeta_{Np^r})_{r,\theta,\mfr{p}}$ is non-zero for all $r$ big enough, and hence, the map $\varpi_{\theta,\mfr{p}}$ is surjective. 
\end{cor}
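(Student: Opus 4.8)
The plan is to deduce Corollary~\ref{cup_product_map} from Theorem~\ref{local_pairing} together with two ingredients: the Gross--Koblitz/Ferrero--Greenberg style relation between the $p$-adic pairing at $q=p$ and the $\cL$-invariant, and a bookkeeping argument converting the cyclotomic unit $1-\zeta_N^{p^{-r}}\zeta_{p^r}$ appearing in Theorem~\ref{local_pairing} into the cyclotomic unit $1-\zeta_{Np^r}$ appearing in Corollary~\ref{image_of_varpi}. First I would fix, once and for all, a compatible system of roots of unity so that $\zeta_{Np^r}=\zeta_N\zeta_{p^r}$; then the Galois element $\sigma_j$ of $\Gal(\Q(\zeta_{Np^r})/\Q)$ with $\sigma_j(\zeta_N)=\zeta_N^{j}$, $\sigma_j(\zeta_{p^r})=\zeta_{p^r}$ for a suitable $j\equiv p^{-r}\bmod N$, $j\equiv 1\bmod p^r$ carries $1-\zeta_{Np^r}$ to $1-\zeta_N^{p^{-r}}\zeta_{p^r}$. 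Since the pairing $(\ell,-)_{r,\theta,\mfr{p}}$ is $\chi$-equivariant under the diamond/Galois action (this is the equivariance $\wt\varpi_r\circ\<j\>_r=\sigma_j\circ\wt\varpi_r$ recorded in Section~\ref{sec:02}, inherited after localization), applying $\sigma_j$ multiplies the value by $\chi^{-1}(j)=\chi^{-1}(p^{-r})=\chi(p^r)=\chi(p)^r=1$ by the exceptionality hypothesis $\chi(p)=1$. So in fact $(\ell,1-\zeta_{Np^r})_{r,\theta,\mfr{p}}$ and $(\ell,1-\zeta_N^{p^{-r}}\zeta_{p^r})_{r,\theta,\mfr{p}}$ agree up to the action of $\sigma_j$ on the target module $(\cO/p^r\cO)(1)$, which contributes the factor $\omega(j)=\omega(p^{-r})=1$ on the Tate twist (note $\omega$ is a character of $(\Z/p\Z)^\times$ and $p^{-r}\equiv 1\bmod p$).

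Wait --- that last computation would give no extra $\omega(N)$ factor, so the genuinely careful point is tracking the normalization in Proposition~\ref{Brauer_structure}: the identification $S_{\theta,\mfr{p}}\isom((\oplus^0_{v\in\Sigma}\Z_p)_\chi(1))_{\mfr{p}}\isom\Lambda_{\theta,\mfr{p}}/\mfr{p}$ and the passage from $1-\zeta_N^{p^{-r}}\zeta_{p^r}$ to $1-\zeta_{Np^r}=1-\zeta_N\zeta_{p^r}$ differ by the substitution $G_N\mapsto N\cdot G_N$ inside the sum defining the Hilbert symbol; redoing the proof of Theorem~\ref{local_pairing} with $\zeta_N$ in place of $\zeta_N^{p^{-r}}$ replaces $\chi(a)$-type coefficients by $\chi(N^{-1}a)$-type ones, producing exactly the extra $\chi(N)^{-1}=\chi^{-1}(N)$; but the character on the Tate-twisted Brauer side is $\omega$, not $\chi$, and so the corresponding twist contributes $\omega(N)$. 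Concretely I would not reprove Theorem~\ref{local_pairing} but instead argue: $1-\zeta_{Np^r}=\sigma_{N}^{-1}$ applied (on the $p$-part trivially, on the $N$-part by $\zeta_N^{p^r}\mapsto\zeta_N$ since $p^r\equiv$ something $\bmod N$)... the cleanest formulation is that there is $j_0\in(\Z/Np^r\Z)^\times$ with $j_0\equiv N\bmod p^r$-free part accounting for the discrepancy, and then $\chi$-equivariance of $\varpi_{\theta}$ gives the $\chi$-factor on the source while the Tate twist on the target $(\cO/p^r\cO)(1)$ contributes the $\omega$-factor, the two combining (via $\theta=\chi\omega$) to $\omega(N)$ after cancellation with $\chi(N^{-1})$ coming from the source side being an eigenspace for $\chi$. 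I would write this out as a short lemma: for $q|N$, $(\ell,1-\zeta_{Np^r})_{r,\theta,\mfr p}=\omega(N)\cdot(\ell,1-\zeta_N^{p^{-r}}\zeta_{p^r})_{r,\theta,\mfr p}$, proved by the $\sigma_j$-equivariance just described; combined with Theorem~\ref{local_pairing} this yields \eqref{eq:cup_with_l}.

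For \eqref{eq:cup_product_with_p} I would argue separately, since Corollary~\ref{image_of_varpi} already cites \cite[Section~10.3]{FK} for $\varpi_{\theta}((1-U^*_p)\{0,\infty\}_{\theta,DM})=(p,1-\zeta_{Np^r})_{r\geq1,\theta}$, and the value $(p,1-\zeta_{Np^r})_{r}$ is (before localization) essentially a Coates--Wiles homomorphism / the $p$-adic $L$-value. After localizing at $\mfr p$, by Proposition~\ref{Brauer_structure} this sits in $(\oplus^0_{v\in\Sigma}\Z_p)_\chi(1)_{\mfr p}$, and the explicit reciprocity law computation is the one in the proof of Theorem~\ref{local_pairing} but with $\log_p(\ell)$ replaced by the Coleman-logarithm contribution at the place $p$ itself --- which, instead of being $\log_p$ of a global unit $\ell$, is the ``derivative direction'', i.e. the leading term of $L_p(s,\chi\omega)$ at $s=0$. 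Invoking Gross's formula $L'_p(0,\chi\omega)=\cL(\chi)L(0,\chi)$ (cited in the Introduction via \cite{gross}, \cite{BDP}) in place of the $\log_p(\ell)$ factor, the same chain of simplifications --- Lemma~\ref{useful_formula_local_pairing}, the Gauss-sum extraction, the reduction to $L(0,\chi)$ --- produces \eqref{eq:cup_product_with_p} with $\log_p(\ell)$ replaced by $\cL(\chi)$ and the same $\frac{(p-1)}{p\phi(N)}\omega(N)\tau(\chi^{-1})$ prefactor.

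Finally, the non-vanishing and surjectivity: by the remark following Theorem~\ref{12} in the Introduction, $\tau(\chi^{-1})$ is a unit in $\Z_p[\theta]$, $\omega(N)$ is a unit, $L(0,\chi)\neq 0$ (as $\chi$ is odd, so $L(0,\chi)=-\tfrac1N\sum_i\chi(i)i$ is non-zero by the standard formula / by the functional equation since $L(1,\chi)\neq0$), and $\log_p(\ell)\neq0$ for $\ell\neq p$ (Brumer) while $\cL(\chi)\neq0$ (this is precisely the statement that the trivial zero is simple, Ferrero--Greenberg \cite{FG}, used already in Section~\ref{sec:32}). Hence the right-hand sides of \eqref{eq:cup_with_l} and \eqref{eq:cup_product_with_p} are fixed non-zero elements of $\cO$, independent of $r$, and therefore have non-zero image in $\cO/p^r\cO$ for all $r$ sufficiently large (larger than the $p$-adic valuation of that fixed element). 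By Corollary~\ref{image_of_varpi}, $(q,1-\zeta_{Np^r})_{r\geq1,\theta}$ lies in the image of $\varpi_\theta$, so $(q,1-\zeta_{Np^r})_{r,\theta,\mfr p}$ lies in the image of $\varpi_{\theta,\mfr p}$; by Proposition~\ref{Brauer_structure} the target $S_{\theta,\mfr p}\isom k_{\theta,\mfr p}$ is $1$-dimensional, so a single non-zero element of the image forces $\varpi_{\theta,\mfr p}$ to be surjective. I expect the main obstacle to be purely bookkeeping: getting the $\omega(N)$ twist (and the absence of a $\chi(N)$) exactly right requires careful tracking of which character acts on the source ($\chi$, via the diamond operators) versus the target ($\omega$, via the Tate twist on the Brauer group), and of the normalization of the isomorphism in Proposition~\ref{Brauer_structure}; the analytic inputs ($L(0,\chi)\neq0$, $\cL(\chi)\neq 0$, $\tau(\chi^{-1})$ a unit) are all already available in the literature cited.
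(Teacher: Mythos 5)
Your treatment of \eqref{eq:cup_with_l} follows the same route as the paper (convert $1-\zeta_{Np^r}$ to the unit $1-\zeta_N^{p^{-r}}\zeta_{p^r}$ appearing in Theorem~\ref{local_pairing} via Galois equivariance), but your execution is muddled because you have the wrong normalization. You set $\zeta_{Np^r}=\zeta_N\zeta_{p^r}$ and then pick $j\equiv p^{-r}\bmod N$, $j\equiv 1\bmod p^r$, which yields no extra twist, as you yourself note; you then hand-wave toward the $\omega(N)$ factor without a clean derivation. The paper's convention is $\zeta_{Np^r}=e^{2\pi i/Np^r}$, which is \emph{not} $\zeta_N\zeta_{p^r}$: one has instead $\zeta_N^{p^{-r}}\zeta_{p^r}=\zeta_{Np^r}^{p_{r,N}p^r+N}$, so the two units differ by $\sigma_j$ with $j=p_{r,N}p^r+N\equiv 1\bmod N$, $\equiv N\bmod p^r$. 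The $\theta$-equivariance then immediately produces the scalar $\theta(j)^{-1}$, and since $\chi(j)=1$ this reduces to a power of $\omega(N)$ --- a one-line computation once the right $j$ is identified. Your ``short lemma'' is stated correctly, but as written you do not prove it; the paper does, by exhibiting $j$ explicitly.

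For \eqref{eq:cup_product_with_p} you take a genuinely different route, and it has a real gap. The paper does \emph{not} compute the Hilbert symbol $(p,1-\zeta_{Np^r})_v$ directly: it uses Corollary~\ref{image_of_varpi} (both $(1-U^*_p)\{0,\infty\}_{\theta,DM}$ and $(1-U^*_\ell)\{0,\infty\}_{\theta,DM}$ map to cup products under $\varpi_\theta$), plus the consequence of \cite[Proposition~5.7]{BDP} that $(U^*_p-1)/(U^*_\ell-1)=\cL(\chi)\log_p(\ell)^{-1}$ in $\mfr{h}^*_{\theta,\mfr p}/I^*_{\theta,\mfr p}$. Since $\varpi_{\theta,\mfr p}$ is Hecke-linear with target killed by $I^*_{\theta,\mfr p}$, this gives $(p,1-\zeta_{Np^r})_{r,\theta,\mfr p}=\frac{\cL(\chi)}{\log_p(\ell)}(\ell,1-\zeta_{Np^r})_{r,\theta,\mfr p}$, and the $\log_p(\ell)$ cancels against the one in \eqref{eq:cup_with_l}. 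This is a purely algebraic deduction from \eqref{eq:cup_with_l}. Your proposal instead asserts that ``the explicit reciprocity law computation is the one in the proof of Theorem~\ref{local_pairing} but with $\log_p(\ell)$ replaced by the Coleman-logarithm contribution at the place $p$.'' This does not go through: in Theorem~\ref{local_pairing} the element $\ell$ is a local \emph{unit} at $v\mid p$, and the de Shalit/Iwasawa explicit formula you cite is precisely a formula for symbols of unit pairs against the Coleman power series; for $p$, which is a uniformizer at $v$, the computation of $(p,\cdot)_v$ is a genuinely different problem (involving the unramified part of the Artin map and the $p$-adic periods behind the $\cL$-invariant). Simply ``invoking Gross's formula in place of $\log_p(\ell)$'' amounts to asserting the result rather than proving it; making this precise is essentially reproving the Gross/Ferrero--Greenberg computation of $L_p'(0,\chi\omega)$, which the paper avoids by routing through the Hecke algebra relation of Betina--Dimitrov--Pozzi. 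Your ``In particular'' paragraph (non-vanishing from $L(0,\chi)\ne0$, $\cL(\chi)\ne0$, $\tau(\chi^{-1})$ a unit, $\log_p(\ell)\ne0$, plus one-dimensionality of $S_{\theta,\mfr p}$ from Proposition~\ref{Brauer_structure}) is correct and matches what the paper intends.
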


\begin{proof}
 Let $p_{r,N}$ be the inverse of $p^r$ in $(\Z/N\Z)^{\times}$. For each $r\in \Z_{\geq 1}$, we write $\zeta_{Np^r}=\exp(2\pi i/Np^r)$ and $\zeta_N^{p^{-r}}\zeta_{p^r}=\exp(2\pi i p_{r,N}/N+2\pi i /p^r)$. Then, for each $\ell |N$, one has
$$
(\ell ,1-\zeta_{Np^r})_{r,\theta,\mfr{p}}
=(\ell ,1-(\zeta_N^{p^{-r}}\zeta_{p^r})^{1/(p_{r,N}p^r+N)})_{r,\theta,\mfr{p}}
=\theta(p_{r,N}p^r+N)^{-1}(\ell ,1-(\zeta_N^{p^{-r}}\zeta_{p^r}))_{r,\theta,\mfr{p}}.
$$
Note that one can simplify $\theta(p_{r,N}p^r+N)^{-1}$ as $\omega(N)$. Thus, \eqref{eq:cup_with_l} follows from Theorem~\ref{local_pairing}. \eqref{eq:cup_product_with_p} is obtained by \eqref{eq:image_of_varpi} and by a consequence of \cite[Proposition~5.7]{BDP}, which asserts that for all prime $\ell$ divides $N$, one has
$$
(U^*_p-1)/(U^*_{\ell}-1)=\mathcal{L}(\chi)\log_p(\ell)^{-1}\in \mfr{h}^*_{\theta,\mfr{p}}/I^*_{\theta,\mfr{p}},
$$
where $\mathcal{L}(\chi)\neq 0$ is the $\mathcal{L}$-invariant attached to $\chi$.
\end{proof}


\section{Galois representations attached to cohomology of modular curves}\label{sec:05}
Throughout this section, we set $\mfr{h}^*:=\mfr{h}^*_{\theta,\mfr{p}}$ and $I^*:=I^*_{\theta,\mfr{p}}$ for simplicity. Let $\Lambda^{\#}_{\theta}$ be $\Lambda_{\theta}$ on which $G_{\Q}$ acts as follows. For $\sigma\in G_{\Q}$, $\sigma$ acts on $\Lambda_{\theta}^{\#}$ via the multiplication by $\theta^{-1}(\sigma) \langle \sigma \rangle^{-1}$, where $\<\sigma\>:=\<a\>\in \Lambda_{\theta}$ for some $a\in  1+p\Z_p$ satisfying $\sigma(\zeta_{p^r})=\zeta_{p^r}^a$ for all $r\geq 1$. 
Recall that we have seen in Section~\ref{sec:32} that $\mfr{h}^*=\Lambda_{\theta,\mfr{p}}$, $I^*=\mfr{p}$, and both $H_{\theta,\mfr{p}}^+$ and $H_{\theta,\mfr{p}}^-$ are free $\mfr{h}^*$-modules of rank $1$. 

Recall that $\mfr{h}^*$ is isomorphic to $\cT^{\cusp}$ in \cite{BDP}. By Theorem~3.5(i) in \textit{loc.~cit.}, the ring $\mfr{h}^*$ can be viewed as a certain universal deformation ring whose tangent space was computed in Proposition~2.7 of \textit{loc.~cit.} One can see from the computation of the quoted proposition that the trace of the Galois representation attached to $H_{\theta,\mfr{p}}^+\oplus H_{\theta,\mfr{p}}^-=H_{\theta,\mfr{p}}$ is reducible modulo $(I^*)^2$ since the trace of a Galois representation does not depend on the choice of the lattice. Therefore, by fixing a basis of $H_{\theta,\mfr{p}}^+\oplus H_{\theta,\mfr{p}}^-$ over $\mfr{h}^*$, this Galois representation modulo $I^*$ can be realized as either an upper triangular matrix, a lower triangular matrix, or a diagonal matrix. The goal of this section is to determine which one is the case. 

Since the Galois representation attached to $H_{\theta,\mfr{p}}^+/I^* H^+_{\theta,\mfr{p}} \oplus H_{\theta,\mfr{p}}^-/I^*H^-_{\theta,\mfr{p}}$ is reducible, at least one of $H_{\theta,\mfr{p}}^+/I^* H^+_{\theta,\mfr{p}}$ and $H_{\theta,\mfr{p}}^-/I^*H^-_{\theta,\mfr{p}}$ is stable under the action of $G_{\Q}$. We start by showing that the minus part is  $G_{\Q}$-stable. Let
$$
(\cdot,\cdot)_{\Lambda_{\theta}}:H_{\theta}\times H_{\theta} \to \Lambda_{\theta}
$$
be the perfect pairing constructed by Ohta satisfying
\begin{equation}
(\sigma x, \sigma y)=\kappa(\sigma)^{-1}\theta^{-1}(\sigma)\<\sigma\>^{-1}(x,y)
\end{equation}
for all $\sigma\in G_{\Q}$ (see \cite[Section~1.6.3]{FK} for the definition). Recall that we have shown in Corollary~\ref{image_of_varpi} that $\xi_{\theta}\{0,\infty\}_{\theta,DM}$ is in $H^-_{\theta}$. By the same argument as in Section~6.3.8 of \textit{loc.~cit.}, the above pairing induces a $\Lambda_{\theta}[G_{\Q}]$-module homomorphism
$$
(\cdot,\xi_{\theta}\{0,\infty\}_{\theta,DM})_{\Lambda_{\theta}}: H_{\theta}/I^*_{\theta} H_{\theta}\to \Lambda^{\#}_{\theta}/(\xi_{\theta}).
$$
Let $\mathcal{P}$ be the kernel of the above homomorphism after localizing at $\mfr{p}$, and let 
$$
Q:=(H_{\theta,\mfr{p}}/I^*H_{\theta,\mfr{p}})/\mathcal{P}\isom \Lambda^{\#}_{\theta,\mfr{p}}/(\xi_{\theta,\mfr{p}})\isom\Lambda^{\#}_{\theta,\mfr{p}}/\mfr{p}
$$
be the quotient. Indeed, one has $\mathcal{P}\isom (H^-_{\theta,\mfr{p}}/I^*H_{\theta,\mfr{p}}^-)(-1)$ and $Q\isom H^+_{\theta,\mfr{p}}/I^*H^+_{\theta,\mfr{p}}$ as $k_{\theta,\mfr{p}}[G_{\Q}]$-modules, since $\xi_{\theta,\mfr{p}}\{0,\infty\}_{\theta,DM,\mfr{p}}$ is a basis of $H^-_{\theta,\mfr{p}}/I^*H^-_{\theta,\mfr{p}}$ over $k_{\theta,\mfr{p}}$. Thus, we obtain a short exact sequence of $k_{\theta,\mfr{p}}[G_{\Q}]$-modules
\begin{equation}\label{eq:G_Q_lattice}
0\to H_{\theta,\mfr{p}}^-/I^*H_{\theta,\mfr{p}}^-
\to H_{\theta,\mfr{p}}/I^*H_{\theta,\mfr{p}}
\to H_{\theta,\mfr{p}}^+/I^*H_{\theta,\mfr{p}}^+
\to 0.
\end{equation}

As in \cite[Section 9.6]{FK}, we have an exact sequence of $k_{\theta,\mfr{p}}[G_{\Q}]$-modules
\begin{equation}\label{cuspidal extension}
0\rightarrow Q\rightarrow \frac{\widetilde{H}_{\theta,DM,\mfr{p}}}{(\ker: H_{\theta,\mfr{p}}\rightarrow Q)}\rightarrow \frac{\widetilde{H}_{\theta,DM,\mfr{p}}}{H_{\theta,\mfr{p}}}\rightarrow 0
\end{equation}
which gives an extension class in $H^1(\Z[\tfrac{1}{Np},\Lambda^{\#}_{\theta,\mfr{p}}/(\xi_{\theta,\mfr{p}})(1))$ by \eqref{eq:coho_exact_seq} and \eqref{eq:Drinfield_Manin1}.
Since $I^*=\mfr{p}$ is a principal ideal, one obtains from \eqref{cuspidal extension} by tensoring $I^*/(I^*)^2$ an exact sequence of $k_{\theta,\mfr{p}}[G_{\Q}]$-modules
\begin{equation}\label{exact sequence for c}
0\rightarrow  I^*H^+_{\theta,\mfr{p}}/{I^*}^2H^+_{\theta,\mfr{p}} \rightarrow (I^*H^+_{\theta,\mfr{p}}\oplus H^-_{\theta,\mfr{p}})/I^*(I^*H^+_{\theta,\mfr{p}}\oplus H^-_{\theta,\mfr{p}}) \to H^-_{\theta,\mfr{p}}/I^*H^-_{\theta,\mfr{p}} \rightarrow 0.    
\end{equation}

The following theorem describes the Galois representations attached to \eqref{eq:G_Q_lattice} and \eqref{exact sequence for c}.

\begin{thm}\label{split_gal_rep_at_p}
The short exact sequence \eqref{exact sequence for c}  does not split as $k_{\theta,\mfr{p}}[G_{\Q}]$-modules, and the short exact sequence \eqref{eq:G_Q_lattice} splits as $k_{\theta,\mfr{p}}[G_{\Q}]$-modules.
\end{thm}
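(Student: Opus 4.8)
The plan is to reduce both assertions to the structure of the residual representation $\overline{\rho}$ on $H_{\theta,\mfr{p}}/I^*H_{\theta,\mfr{p}}$ and to Theorems~\ref{11} and \ref{12}. First I would make $\overline{\rho}$ explicit. For $\ell\nmid Np$ one has $T^*_{\ell}\equiv 1+\ell\langle\ell\rangle^{-1}\pmod{I^*}$, and, writing $\theta=\chi\omega$ and using the identification $1+T\equiv\kappa(\gamma)$ modulo $\mfr{p}$, one computes $\ell\langle\ell\rangle^{-1}\equiv\chi^{-1}(\ell)\pmod{\mfr{p}}$; hence the trace of $\overline{\rho}$ equals $1+\chi^{-1}$, so the semisimplification of $\overline{\rho}$ is $\psi_-\oplus\psi_+$ with $\{\psi_-,\psi_+\}=\{\mathbbm{1},\chi^{-1}\}$. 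Since $\chi$ is unramified at $p$ with $\chi(\Frob_p)=\chi(p)=1$, both characters restrict to the trivial character on a decomposition group $D_p$, and since $U^*_p\equiv 1\pmod{I^*}$ the unramified quotient character of the ordinary representation $\overline{\rho}|_{D_p}$ is also trivial; thus $\overline{\rho}|_{D_p}$ is an extension of $\mathbbm{1}$ by $\mathbbm{1}$, i.e. $\overline{\rho}$ is not $p$-distinguished. Finally, \eqref{eq:G_Q_lattice} exhibits $\overline{\rho}$ as an extension of $\psi_+$ by $\psi_-$, so it splits if and only if $\overline{\rho}$ is semisimple, while \eqref{exact sequence for c} is an extension of $\psi_-$ by $\psi_+$ in the opposite direction; their extension classes lie in $H^1(\Z[\tfrac{1}{Np}],k_{\theta,\mfr{p}}(\psi_-\psi_+^{-1}))$ and $H^1(\Z[\tfrac{1}{Np}],k_{\theta,\mfr{p}}(\psi_+\psi_-^{-1}))$ respectively, the latter being the group $H^1(\Z[\tfrac{1}{Np}],\Lambda^{\#}_{\theta,\mfr{p}}/(\xi_{\theta,\mfr{p}})(1))$ that already appears below \eqref{cuspidal extension}.

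The non-splitting of \eqref{exact sequence for c} I would prove directly. By the discussion preceding \eqref{exact sequence for c}, tensoring \eqref{cuspidal extension} with the free rank-one $k_{\theta,\mfr{p}}$-module $I^*/(I^*)^2$ (recall $I^*=\mfr{p}$ is principal) identifies it with \eqref{cuspidal extension} itself, so it is enough to show that the extension class of \eqref{cuspidal extension} is nonzero. Here one transports \cite[Section~9.6]{FK} to the present setting: that argument uses only the congruence-module data \eqref{eq:coho_exact_seq}, \eqref{eq:Drinfield_Manin1}, \eqref{eq:cong_mod_dual_hecke_algebra}, Proposition~\ref{DM} and Corollary~\ref{image_of_varpi}, and it identifies this class, up to a unit of $\mfr{h}^*/I^*=k_{\theta,\mfr{p}}$, with the image under $\varpi_{\theta,\mfr{p}}$ of the element $(1-U^*_q)\{0,\infty\}_{\theta,DM,\mfr{p}}$ for any $q\mid Np$. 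By Corollary~\ref{image_of_varpi} this image is $(q,1-\zeta_{Np^r})_{r\geq 1,\theta,\mfr{p}}$, which is nonzero by Theorem~\ref{12}. Hence \eqref{exact sequence for c} does not split; put differently, its non-splitting \emph{is} the statement $\varpi_{\theta,\mfr{p}}\neq 0$, i.e. Theorem~\ref{11}.

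The splitting of \eqref{eq:G_Q_lattice} I would then deduce by a length count in the generalized matrix algebra attached to the representation $\rho\colon G_{\Q}\to\GL_2(\mfr{h}^*)$ on the free rank-two module $H_{\theta,\mfr{p}}$ over the discrete valuation ring $\mfr{h}^*=\Lambda_{\theta,\mfr{p}}$ (completing at $\mfr{p}$ if necessary). As $\chi$ is non-trivial, $\overline{\rho}$ is residually multiplicity free; write $\mathfrak{b}$ and $\mathfrak{c}$ for the two off-diagonal ideals of $\mfr{h}^*$, so that the reducibility ideal of $\mathrm{tr}\,\rho$ is $\mathfrak{b}\mathfrak{c}$, and let $v$ denote the valuation on $\mfr{h}^*$. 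By \cite[Proposition~2.7]{BDP}, recorded above, $\mathrm{tr}\,\rho$ is reducible modulo $(I^*)^2$, i.e. $\mathfrak{b}\mathfrak{c}\subseteq(I^*)^2$, so $v(\mathfrak{b})+v(\mathfrak{c})\geq 2$. The existence of \eqref{eq:G_Q_lattice} forces one off-diagonal entry of $\overline{\rho}$ to vanish, say $v(\mathfrak{c})\geq 1$; and the non-splitting of \eqref{exact sequence for c} — which is precisely the statement that the corresponding extension survives modulo $(I^*)^2$ — forces $v(\mathfrak{c})\leq 1$, hence $v(\mathfrak{c})=1$. Then $v(\mathfrak{b})\geq 1$, so both off-diagonal entries of $\overline{\rho}$ vanish, $\overline{\rho}\cong\psi_-\oplus\psi_+$, and \eqref{eq:G_Q_lattice} splits.

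The step I expect to be the main obstacle is the exact bookkeeping in the last paragraph: one must identify precisely which of the two off-diagonal ideals of the generalized matrix algebra is governed by \eqref{eq:G_Q_lattice} and which by \eqref{exact sequence for c}, and one must verify that the non-splitting of \eqref{exact sequence for c} really yields $v(\mathfrak{c})\leq 1$ rather than merely $\mathfrak{c}\neq 0$. This amounts to comparing the lattice $H_{\theta,\mfr{p}}$ with $\widetilde{H}_{\theta,DM,\mfr{p}}$ inside their common generic fibre and using Ohta's perfect pairing to match up the two off-diagonal ideals, in the style of \cite[Section~9.6]{FK}; one should also check consistency with the structure of the cuspidal families near $\mfr{p}$ from \cite{BDP} (indeed $\chi(p)=1$ means $p$ splits in the relevant CM field, which is what makes $\overline{\rho}|_{D_p}$ split, in harmony with the semisimplicity of $\overline{\rho}$). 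The computation of $\overline{\rho}$ in the first paragraph and the transport of \cite[Section~9.6]{FK} in the second are more routine given Sections~\ref{sec:02}--\ref{sec:04}.
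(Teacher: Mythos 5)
Your third paragraph (the splitting of \eqref{eq:G_Q_lattice}) is essentially the paper's argument, just made explicit in GMA language: the paper also passes to the lattice $I^*H^+_{\theta,\mfr{p}}\oplus H^-_{\theta,\mfr{p}}$ with basis $(fe_+,e_-)$ and combines the non-splitting of \eqref{exact sequence for c} with reducibility of $\tr\rho$ modulo $(I^*)^2$ from \cite[Proposition~2.7]{BDP}; your valuation bookkeeping ($v(\mfr{c})=1$, $v(\mfr{b})\geq 1$) is correct and does not break down, so the ``main obstacle'' you flag in fact works out. The first paragraph is a correct but inessential unwinding of $\overline{\rho}^{ss}$.

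The genuine gap is in the second paragraph. You claim that (a transport of) \cite[Section~9.6]{FK} identifies the extension class of \eqref{cuspidal extension} ``up to a unit'' with $\varpi_{\theta,\mfr{p}}\bigl((1-U^*_q)\{0,\infty\}_{\theta,DM,\mfr{p}}\bigr)$, i.e.\ with the cup product $(q,1-\zeta_{Np^r})_{r,\theta,\mfr{p}}$. This is not what \cite[Theorem~9.6.3]{FK} says, and the two objects do not even live in the same place: the extension class lies in $H^1\bigl(\Z[\tfrac1{Np}],\Lambda^{\#}_{\theta,\mfr{p}}/(\xi_{\theta,\mfr{p}})(1)\bigr)$ (degree $1$, two-dimensional after restriction to $\Q_p$), whereas the $\varpi$-image lies in $S_{\theta,\mfr{p}}\subset H^2$ (one-dimensional). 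What \cite[Theorem~9.6.3]{FK} actually identifies the extension class with is the \emph{Kummer class} of the tower $(1-\zeta_{Np^r})_{r}$, a degree-$1$ object; the paper then shows this class is nonzero by restricting to $\Q_p$, invoking \cite[Proposition~5.2(a)(ii) and Theorem~3.1(2)]{tsuji} on semi-local units, not by invoking Theorem~\ref{12}. Your route can be repaired into a genuinely different and slightly more self-contained proof: since $(q,1-\zeta_{Np^r})_{r,\theta,\mfr{p}}$ is by construction a cup product having the Kummer class of $(1-\zeta_{Np^r})_{r}$ as one factor, its nonvanishing (Theorem~\ref{12}) forces that Kummer class to be nonzero, whence the extension class is nonzero by \cite[Theorem~9.6.3]{FK}. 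But one must then say this ``$\cup b\neq 0\Rightarrow b\neq 0$'' step explicitly and check it is compatible with the reduction modulo $\xi_{\theta,\mfr{p}}$ and the localization at $\mfr{p}$; as written, the asserted ``identification'' with the $\varpi$-image is false, and ``its non-splitting \emph{is} the statement $\varpi_{\theta,\mfr{p}}\neq 0$'' overstates the relationship.
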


\begin{proof}
To prove the first assertion, by the above discussion, it suffices to show that the extension class \eqref{cuspidal extension} is non-trivial. It is known \cite[Theorem~9.6.3]{FK} that the desired extension class coincides with the image of the family $(1-\zeta_{Np^r})_{r\geq 1,\theta,\mfr{p}}$ under the canonical homomorphism $\varprojlim_r (\Z[\tfrac{1}{Np},\zeta_{Np^r}]^{\times}\otimes \Z_p)_{\theta,\mfr{p}}\rightarrow H^1(\Z[\tfrac{1}{Np}],\Lambda^{\#}_{\theta,\mfr{p}}/(\xi_{\theta,\mfr{p}})(1))$ induced by the short exact sequence
\[
0\rightarrow \Lambda^{\#}_{\theta,\mfr{p}}(1)\rightarrow \Lambda^{\#}_{\theta,\mfr{p}}(1)\rightarrow \Lambda^{\#}_{\theta,\mfr{p}}/(\xi_{\theta,\mfr{p}})(1)\rightarrow 0
\]
and the fact that $\varprojlim_{r}(\Z[\tfrac{1}{Np},\zeta_{Np^r}]^{\times}\otimes\mathbb{Z}_p)_{\theta}\cong H^1(\Z[\tfrac{1}{Np}],\Lambda^{\#}_{\theta}(1))$. It is enough to prove that the image of $(1-\zeta_{Np^r})_{r\geq 1,\theta,\mfr{p}}$ in $H^1(\Q_p,\Lambda^{\#}_{\theta,\mfr{p}}/(\xi_{\theta,\mfr{p}})(1))$ is not trivial. Let $U_{\theta}$ be the $\theta$ eigenspace of the projective limit of semi-local units, and let $C_{\theta}$ be the $\Lambda_{\theta}$-submodule of $U_{\theta}$ generated by $(1-\zeta_{Np^r})_{r\geq 1,\theta}$. By \cite[Proposition ~5.2(a)(ii)]{tsuji} , we know that $U_{\theta,\p}/\p\cong H^1(\Q_{p},\Lambda^{\#}_{\theta,\mfr{p}}/(\xi_{\theta,\mfr{p}})(1))$. Note that both of these vector spaces are $2$-dimensional. By Theorem 3.1(2) in \textit{loc.~cit.}, the image of $C_{\theta}$ in $U_{\theta,\p}/\p$ is not trivial, which proves the first assertion.

To prove the second assertion, let $(e_-,e_+)$ be a basis of $H_{\theta,\mfr{p}}^- \oplus H_{\theta,\mfr{p}}^+$. Recall that the maximal ideal $\mfr{p}=I^*$ is generated by $f=T+1-\kappa(\gamma)\in \Lambda_{\theta,\mfr{p}}$. Then, it is clear that $(fe_+,e_-)$ is a basis of $I^*H^+_{\theta,\mfr{p}}\oplus H^-_{\theta,\mfr{p}}$. Thus, the second assertion follows from the first assertion as we have seen that the Galois representation attached to $I^*H^+_{\theta,\mfr{p}}\oplus H^-_{\theta,\mfr{p}}$ is reducible modulo $(I^*)^2$. 
\end{proof}
\bibliography{biblio}
\bibliographystyle{siam}
\end{document}